\newtheorem{thm}{Theorem} 
\newtheorem{lem}[thm]{Lemma}
\theoremstyle{definition}
\newtheorem{defn}[thm]{Definition}
\newtheorem{constr}[thm]{Construction}
\theoremstyle{definition}
\newcounter{hypc}
\newtheorem{quest}[hypc]{Question}
\newtheorem{hypot}[hypc]{Hypothesis}
\theoremstyle{remark}
\newtheorem{rem}[thm]{Remark}
\def\WCl{{\mathrm{WCl}}}
\def\uT{{\hat T}}
\def\LP{{\mathscr L}}
\def\I{{\mathbb I}}
\def\tP{{\Tilde P}}
\def\W#1{\hbox{\tt #1}}
\def\divides{\mathrel{\hbox{$\,|\,$}}}
\def\Pz{P^{\Set{Z}}}
\begin{document}

\title[The limit polynomials for Chacon transformation]
	{Several questions and hypotheses concerning the limit polynomials for Chacon$_{(3)}$~transformation}

\if0=1
\author{A.A.\,Prikhod'ko}
\email{sasha.prihodko@gmail.com}
\altaffiliation{Laboratoire~des~Mat\'{e}matiques Rafa\"{e}l Salem, l'Universit\'{e} de Rouen}
\thanks{CNRS (Normandie, France)}
\affiliation{Moscow State University, Dept.\ of Mechanics and Mathematics}

\author{V.V.\,Ryzhikov}
\email{vryzh@mail.ru}
\affiliation{Moscow State University, Dept.\ of Mechanics and Mathematics}
\fi

\author{A.A.\,Prikhod'ko$^{1}$ and V.V.\,Ryzhikov$^{2}$}
\affiliation
{$^{1}$Moscow State University, Dept.\ of Mechanics and Mathematics\ / 
Laboratoire~des~Mat\'{e}matiques Rafa\"{e}l Salem, l'Universit\'{e} de Rouen. 
E-mail:~\href{sasha.prihodko@gmail.com}{sasha.prihodko@gmail.com}
\\
$^{2}$Moscow State University, Dept.\ of Mechanics and Mathematics. 
E-mail:~\href{vryzh@mail.ru}{vryzh@mail.ru}}

\date{\today}

\begin{abstract}
We study the weak closure $\LP = \WCl(\{\uT^k\})$ of powers of 
non-singular Chacon transformation with $2$-cuts. 
It~is still an open question does $\LP$ contain any Markov operator 
except an orthogonal projector to the constants $\Theta$ and some polynomials $P(\uT)\:$? 
In~this~paper we calculate a~particular set of limit polynomials 
$$
	P_m(\uT) = \lim_{n \to \infty} \uT^{-mh_n}, \qquad m \in \Set{Z}, 
$$
where $h_n = (3^n-1)/2$ are the sequence of heights of towers 
in a standard rank one representation of the Chacon map. 
We~show that for any ${d \ge 2}$ 
the family of limit polynomials contains infinitely many 
polynomials of degree~$d$.  
We also formulate hyposeses and open questions concerning the sequence $P_m$ and the~entire set~$\LP$. 
\end{abstract}

\maketitle

\section{Introduction}

Chacon transformation in terms of symbolic dynamics can be definied 
as a substitution system over the finite alphabet ${\Set{A} = \{0,1\}}$ 
via a~pair of substitution rules 
$$
	0 \; \mapsto \; 0010, \qquad 
	1 \; \mapsto \; 1. 
$$
Starting with an initial word $w_0 = 0$ and applying the substitution transform 
we construct the sequence $w_n$, 
\begin{gather*}
	w_0 = 0 \\ 
	w_1 = 0010 \\ 
	w_2 = 0010001010010 \\ 
	w_3 = {\scriptstyle 0010001010010001000101001010010001010010} \\ 
	\dots 
\end{gather*}
and then define an infinite word $w_\infty$ such that each $w_n$ is a prefix of~$w_\infty$. 
Further, considering the closure $X$ of all shifts of~$w_\infty$ in the space $\Set{A}^\infty$ 
endowed with the Tikhonov topology 
we come to a~topological dynamical system $(S,X,\cB)$, where 
$\cB$ is the $\sigma$-algebra of Borel sets and $T$ is the shift transformation, 
\begin{equation*}
	T \Maps \dots, x_0,x_1,\dots,x_j, \dots \; \mapsto \; \dots, x_1,x_2,\dots,x_{j+1}, \dots
\end{equation*}
Let us consider a~natural invariant measure~$\mu$ on the measurable space $(X,\cB)$ 
defined as follows. 
For a~finite word $w$ let $\mu([w])$ be the empirical probability of 
observing $w$ in~$w_\infty$, where 
where $[w]$ is the set encoded by~$w$: 
$$
	[w] \eqdef \{ x\in X \where x_0 = w(0), \dots, x_{\ell-1} = w(\ell-1) \}, 
$$
$\ell = |w|$ is the length of~$w$ and $w(j)$ denotes the letter  at position~$j$  in~$w$. 

\begin{defn}
The map $T$ considered as a~measure-pre\-ser\-ving invertible transformation 
of the probability space $(X,\cB,\mu)$ 
is called {\it non-singular Chacon transformation with $2$-cuts\/} 
or {\it Chacon}$_{(3)}$ {\it transformation\/} 
(see~\cite{ChaconMap,FriedmanErgTh}). 
\end{defn}

Transformation $T$ has an interesting combination 
of ergodic properties. It is known to be weakly mixing and power weakly mixing~\cite{DanilenkoOnChaconPWMix}, 
but not strongly mixing~\cite{ChaconMap}. 
It~has trivial centralizer \cite{delJuncoOnASimple} and minimal self-joinings~\cite{JRS}. 
It is also known that the spectral measure $\sigma$ of Chacon transformation $T$  
is singular and its convolutions satisfy the~following condition of pairwise singularity~\cite{PR}, 
\begin{gather*}
	\sigma \perp \sigma \conv \sigma, \\ 
	\sigma \conv \sigma \perp \sigma \conv \sigma \conv \sigma, \\
	\dots \\ 
	\sigma^{*k} \perp \sigma^{*\ell} \qquad \text{for any} \quad k \not= \ell. 
\end{gather*}
The study of convolutions of the spectral type measure $\sigma$ goes back to the 
Kolmogorov's question concerning the hypothetic group property of spectrum: 
{\it is it true that $\sigma \conv \sigma \ll \sigma$\/}? 
This property holds for the discrete part of spectrum, but it is false for the singular component. 
Moreover, now we know many examples of ergodic transformations~$T$ 
such that ${\sigma \conv \sigma \perp \sigma}$ 
(see \cite{Oseledec2,StepinGenPropr,GoodsonSpThepry,delJuncoLem}).   

For a survey of problems in modern spectral theory of dynamical systems the~reader 
can refer to \cite{LemEncycloSpTh} and~\cite{KatokThouvenotSpTh}. 

\begin{defn}
We say that a~map $T$ is {\it mixing\/} if 
$$
	\mu(T^k A \cap B) \to \mu(A)\,\mu(b) \quad \text{as \ $k \to \infty$}, 
$$
for any measurable sets $A$ and $B$, and we call $T$ {\it weakly mixing\/} 
if the convergence holds for a~subsequence~$k_j$. 
\end{defn}

Both mixing and weak mixing properties can be described in spectral terms. 

\begin{defn}
Let $\uT$ be the unitary {\it Koopman operator}, associated with~$T$ 
and acting in the~separable Hilbert space ${H = L^2(X,\mu)}$ by the following rule 
$$
	\uT \Maps f(x) \mapsto f(Tx). 
$$
\end{defn}

A sequence of bounded linear operators $\cA_j \Maps H \to H$ 
in a Hilbert space~$H$ 
{\it converges weakly\/} to $\cA$ if for any ${f,g \in H}$ 
$$
	\scpr<\cA_jf,g> \to \scpr<\cA f,g>, \qquad j \to \infty. 
$$
Let $\Theta$ denote the orto-projector to constants, 
$$
	(\Theta f)(x) \equiv \int_X f(z) \,d\mu(z). 
$$
%
A transformation $T$ is weakly mixing if and only if  
$$
	T^{k_j} \to \Theta 
$$
for some subsequence $k_j$. It means that $\Theta$ is in the weak closure $\LP = \WCl(\{\uT^k\})$ 
of powers~$\uT^k$. 

\section{Limit polynomials}
 
In our investigation~\cite{PR} to prove the pairwise singularity 
of the convolutions $\sigma^{*k}$ we used the following observation. 

\begin{lem}\label{lemQuadratic} 
In the weak close of powers $\LP = \WCl(\{\uT^k\})$ 
for Chacon transformation $T$ one can find 
an infinite family of non-trivial square polynomials 
$$
	Q_m(\uT) = \frac{(3^s-1)\I + 2(3^s+1)\uT + (3^s-1)\uT^2}{4\cdot 3^s}, 
$$
for $m = 3^s + 1$ and, moreover, 
$$
	Q_m(\uT) = \lim_{n \to \infty} \uT^{mh_n-l_s}, 
$$
where $l_s = (3^s-1)/2$ and $\I$ is the identity operator.  
\end{lem}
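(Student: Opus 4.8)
The plan is to work entirely inside the standard rank one cutting-and-stacking model of $T$, in which the tower at stage $n$ has height $h_n$ and is obtained from the tower at stage $n-1$ by cutting it into three equal columns, placing one spacer above the middle column and restacking; this is the geometric counterpart of the substitution $0\mapsto 0010$ and gives $h_{n+1}=3h_n+1$. Since the level sets of these towers generate the Borel $\sigma$-algebra, weak convergence of $\uT^{a_n}$ is equivalent to convergence of the overlaps $\mu(T^{-a_n}A\cap B)$ for $A,B$ ranging over levels of a single fixed reference tower. It therefore suffices to determine, for a $\mu$-generic point, the limiting distribution of the displacement that $T^{a_n}$ induces relative to the reference tower, and to check that this distribution is the one encoded by the coefficients of $Q_m(\uT)$.

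First I would isolate the elementary displacement rule. A point lying in the $i$-th copy of the stage-$n$ tower is carried by $\uT^{h_n}$ into the $(i+1)$-th copy, shifted inside the tower by exactly the number of spacers separating the two copies, a quantity in $\{0,1\}$. Iterating, $\uT^{m h_n}$ carries the $i$-th copy into the $(i+m)$-th copy with internal displacement $S(i,m)$, the number of spacers contained in the block of $m$ consecutive copies beginning at $i$. The correction $l_s$ is then a pure recentring: using the identity $3^s h_n=h_{n+s}-l_s$ (equivalently $m h_n-l_s=h_{n+s}+h_n-2l_s$), the offset $l_s$ is chosen precisely so that the displacement produced by $\uT^{m h_n-l_s}$ is centred at the low powers $0,1,2$, and multiplication by the fixed operator $\uT^{-l_s}$ commutes with the weak limit and performs this shift. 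Consequently $\lim_n\uT^{m h_n-l_s}=\sum_{k}w_k\,\uT^{k}$, where $w_k$ is the asymptotic frequency, over $i$, that a window of $m=3^s+1$ consecutive copies contains exactly $l_s+k$ spacers.

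The heart of the argument is the combinatorial claim that, for $m=3^s+1$, the window count $S(i,m)$ takes only the three consecutive values $l_s,l_s+1,l_s+2$, with limiting frequencies equal to the coefficients $\frac{3^s-1}{4\cdot3^s}$, $\frac{3^s+1}{2\cdot3^s}$, $\frac{3^s-1}{4\cdot3^s}$. Here I would exploit the self-similarity of the spacer pattern: the sequence of gaps between consecutive stage-$n$ copies is the fixed point of the substitution $0\mapsto 010$, $1\mapsto 011$, and a block of length $3^s$ carries spacer-sum $l_s$ or $l_s+1$ according to its type. A window of length $3^s+1$ straddles the suffix of one such block and the prefix of the next, so $S(i,m)$ splits into a block contribution plus a one-symbol overlap; feeding the triple structure of the substitution into this decomposition reduces the computation of the $w_k$ to a short induction on $s$ (or a transfer computation over the finitely many relative positions and block-type pairs), which I expect to collapse to the stated closed form.

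The step I expect to be the main obstacle is twofold. On the analytic side one must control the non-generic contributions — points near the tops of towers, and the passage from the copy scale down to the fixed reference scale — and show they have measure $o(1)$, so that the overlap integrals genuinely converge to the frequencies $w_k$; this is precisely where one argues that the limit is an honest polynomial in $\uT$ rather than a more diffuse Markov operator. On the combinatorial side, the delicate point is the exact evaluation of the three frequencies for every $s$: proving that the straddling window yields exactly a three-point distribution and identifying the middle and extreme weights requires careful bookkeeping of the boundary cases in the self-similar recursion, and it is here that the specific constants of $Q_m$, and their dependence on $3^s$, are pinned down.
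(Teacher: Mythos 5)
Your framework is, in substance, the paper's own argument in different clothing: the paper realizes Chacon as an integral (skew-product) transformation over the $3$-adic odometer with cocycle $\phi_0$, proves $\lim_n\uT^{h_n}=(\I+\uT^{-1})/2$ by exactly your displacement argument, and states the general principle $\lim_n \uT^{-mh_n}=\int_{\Set{Z}}\uT^k\,d\rho_m(k)$, where $\rho_m$ is the law of the cocycle sum $\phi_0^{(m)}=\phi_0+\phi_0\circ S+\dots+\phi_0\circ S^{m-1}$. Your gap sequence (the fixed point of $g\mapsto 01g$) is precisely this cocycle read along an odometer orbit, and your window count $S(i,m)$ is $\phi_0^{(m)}$; your treatment of tower tops, equal measure of copies, and recentering is the routine half and is fine (up to sign bookkeeping: the displacement of $\uT^{mh_n-l_s}$ lands near the powers $-2l_s$, not at $\{0,1,2\}$, which is harmless since $\LP$ is invariant under multiplication by $\uT^{\pm 1}$). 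But that routine half is already in the paper; the entire content of the lemma is the exact three-point law of $S(i,3^s+1)$, and that is exactly the step your proposal does not perform. You assert the constants and ``expect'' the self-similar recursion to collapse to them, while simultaneously flagging that same computation as the main obstacle: nothing in the proposal establishes the coefficients of $Q_m$.

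This gap is not a formality, for two reasons. First, what makes the law computable in closed form is an exact renormalization identity for the cocycle (the paper's lemma on $\phi^{(3^\ell)}$, modulo its misprints): the sum of $3^\ell$ consecutive cocycle values equals $l_\ell$ plus a single cocycle value evaluated at the $\ell$-fold digit shift of the base point. This converts $\phi_0^{(3^s+1)}-l_s$ into a sum of two explicit $\{0,1\}$-valued digit functions that are independent outside an explicit event of measure $3^{-s}$, and a finite hand computation of the joint law on that event is what produces the $3^{-s}$-corrections to $(\tfrac14,\tfrac12,\tfrac14)$; your sketch contains no substitute for this identity, only a promise of ``bookkeeping.'' Second, when the computation is actually carried out, the frequencies for windows of length $m=3^s+1$ come out as $\frac{3^{s+1}-1}{4\cdot 3^{s+1}}$, $\frac{3^{s+1}+1}{2\cdot 3^{s+1}}$, $\frac{3^{s+1}-1}{4\cdot 3^{s+1}}$ --- not the values with $3^s$ that you (following the lemma's display) assert. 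The paper's own Table~1 confirms this: $\tP_4(z)=\tfrac19(2+5z+2z^2)$ and $\tP_{10}(z)=\tfrac1{54}(13+28z+13z^2)$, so the displayed family of quadratics does lie in $\LP$, but it is realized at $m=3^{s-1}+1$ rather than $m=3^s+1$ (the lemma's statement itself carries this index shift). A proof that takes the target constants on faith can neither detect nor repair this re-indexing; performing the frequency computation is both the missing step and the place where the statement itself has to be corrected.
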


In order to understand this phenomem let us consider a~simplier case. 

\begin{lem}\label{lemOneHalf}
There exists a sequence ${k_j \to \infty}$ such that 
$$
	\uT^{k_j} \to \frac{\I + \uT}{2}. 
$$
\end{lem}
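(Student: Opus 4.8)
The plan is to verify the weak convergence on a dense family of test functions, namely the indicators $\chi_{I^N_p}$ of the levels $I^N_0,\dots,I^N_{h_N-1}$ of the stage-$N$ tower $\tau_N$, for each fixed $N$. Since these span a dense subspace as $N\to\infty$, it suffices to understand the limiting overlaps $\mu(T^{-k_j} I^N_q \cap I^N_p)$. I will use the standard rank one picture: the heights satisfy $h_{n+1}=3h_n+1$, and at each stage the tower $\tau_n$ is cut into three equal columns $A,B,C$ with a single spacer inserted just below~$C$. A point $x$ is then coded by the sequence of column digits $\kappa_j(x)\in\{0,1,2\}$ telling in which of the three copies it sits at stage~$j$; with respect to $\mu$ these digits are independent and uniform, up to corrections coming from the spacers and tower tops whose total measure tends to~$0$.

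First I would compute the limit of $\uT^{h_n}$ by tracking the change $\Delta$ in the stage level produced by $T^{h_n}$. Reading the level of $x$ in $\tau_{n+1}$ through $\kappa_{n+1}$, a direct inspection gives three cases: if $\kappa_{n+1}=0$ (column $A$) then $T^{h_n}x$ lands in column $B$ at the same stage-$n$ level, so $\Delta=0$; if $\kappa_{n+1}=1$ (column $B$) then $T^{h_n}x$ lands in column $C$ one level lower, so $\Delta=-1$; and if $\kappa_{n+1}=2$ (column $C$) the image leaves $\tau_{n+1}$, and the very same analysis repeats one stage higher, now governed by $\kappa_{n+2}$. Iterating, $\Delta$ is determined by the first digit $\kappa_{n+r}$ that differs from $2$: it equals $0$ if that digit is $0$ and $-1$ if it is $1$.

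Summing over $r$, each of the two outcomes has limiting weight $\sum_{r\ge 1}3^{-r}=\tfrac12$, while the event that the recursion never terminates has measure $\lim_r 3^{-r}=0$. Hence $\mu(T^{-h_n}I^N_q\cap I^N_p)$ converges to $\tfrac12\,\mu(I^N_p)$ exactly when $q\in\{p,p-1\}$ and to $0$ otherwise, which is precisely the pairing of $\tfrac12(\I+\uT^{-1})$ against $\chi_{I^N_p},\chi_{I^N_q}$; thus $\uT^{h_n}\to\tfrac12(\I+\uT^{-1})$. To reach the stated operator I take $k_j=h_j+1$: the additional step $T$ shifts every $\Delta$ by $+1$ off a set of measure $O(1/h_N)$, so the limit becomes $\tfrac12(\I+\uT)$; equivalently one passes to adjoints, which is legitimate because weak operator convergence is preserved under $\ast$ and $\WCl$ is $\ast$-closed. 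Since $k_j=h_j+1\to\infty$, this proves the claim.

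I expect the main obstacle to be the case $\kappa_{n+1}=2$, where the image of the top third of the tower spills out of $\tau_{n+1}$: one must argue that this escaping mass reappears one stage higher subject to the identical trichotomy, so that the contributions telescope into the geometric series above and produce exactly the $\tfrac12$--$\tfrac12$ split. A secondary, routine point is the uniform control of the exceptional sets — the top levels of $\tau_N$ and the lone spacers — whose measure is $O(1/h_N)$ and disappears after letting $N\to\infty$.
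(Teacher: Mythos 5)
Your proof is correct and takes essentially the same route as the paper's: your recursion on the column digits $\kappa_{n+r}$ (the first digit differing from $2$ deciding whether $\Delta=0$ or $\Delta=-1$, each outcome carrying limiting weight $\tfrac12$) is precisely the paper's cocycle $\phi_0$ over the $3$-adic odometer base, tested against functions constant on tower levels, and it yields the same intermediate limit $\uT^{h_n}\to\tfrac12(\I+\uT^{-1})$. Your concluding passage to $\tfrac12(\I+\uT)$ via $k_j=h_j+1$ (or adjoints) matches the paper's final conjugation step, and is in fact slightly more careful, since choosing $k_j=h_j+1$ guarantees $k_j\to\infty$ as the statement requires.
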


\begin{proof}
Another way to define Chacon transformation as a~measure-preserving transformation 
is to use the concept of rank one transformation. 

\begin{defn}
Let $T$ be a measure-preserving transformation of a probability space $(X,\cB,\mu)$. 
Then $T$ is called {\it rank one transformation\/} 
if there exists a sequence of Rokhlin tower partitions 
$$
	\xi_j = \{B_j,TB_j,T^2B_j,\dots,T^{h_n-1}B_j,\: E_j\}
$$
of the phase space such that ${\mu(E_j) \to 0}$ and for any measurable set $A$ 
one can fins $\xi_j$-measurabe sets $A_j$ approximating $A$: 
$
	\mu(A_j \syms A) \to 0 
$
as $j \to \infty$. 
\end{defn}
 
In fact, Chacon transformation is rank one and can be constructed using 
so-called cutting-and-stacking construction. 

\begin{figure}[ht]
	\includegraphics[width=60mm]{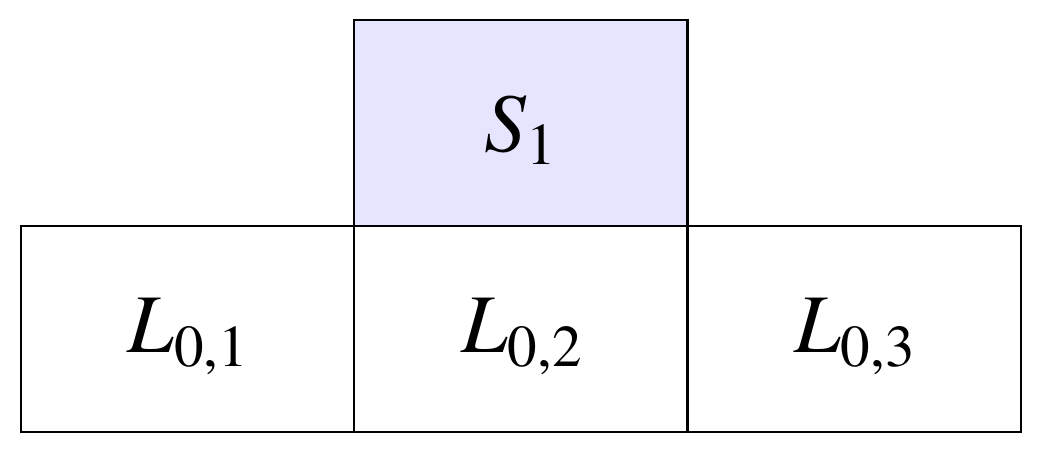} 
	\caption{Chacon$_{(3)}$ transformation: several steps in 
		the cutting-and-stacking construction:~${n = 1}$  
	} 
	\label{fChacomMapCS1}
\end{figure}

\begin{figure}[ht]
	\includegraphics[width=60mm]{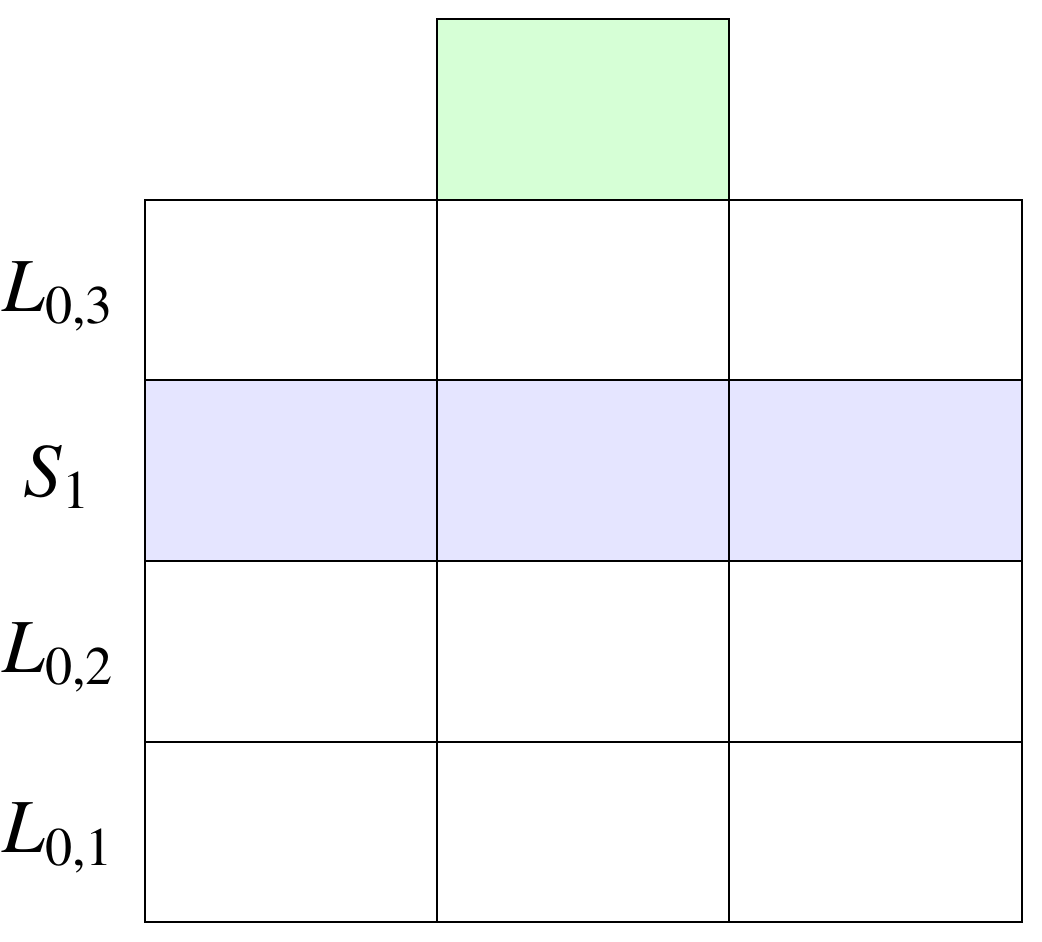} 
	\caption{Cutting-and-stacking construction: ${n = 2}$} 
	\label{fChacomMapCS2}
\end{figure}

\begin{figure}[ht]
	\begin{tabular}{ccc}
		\includegraphics[height=84mm]{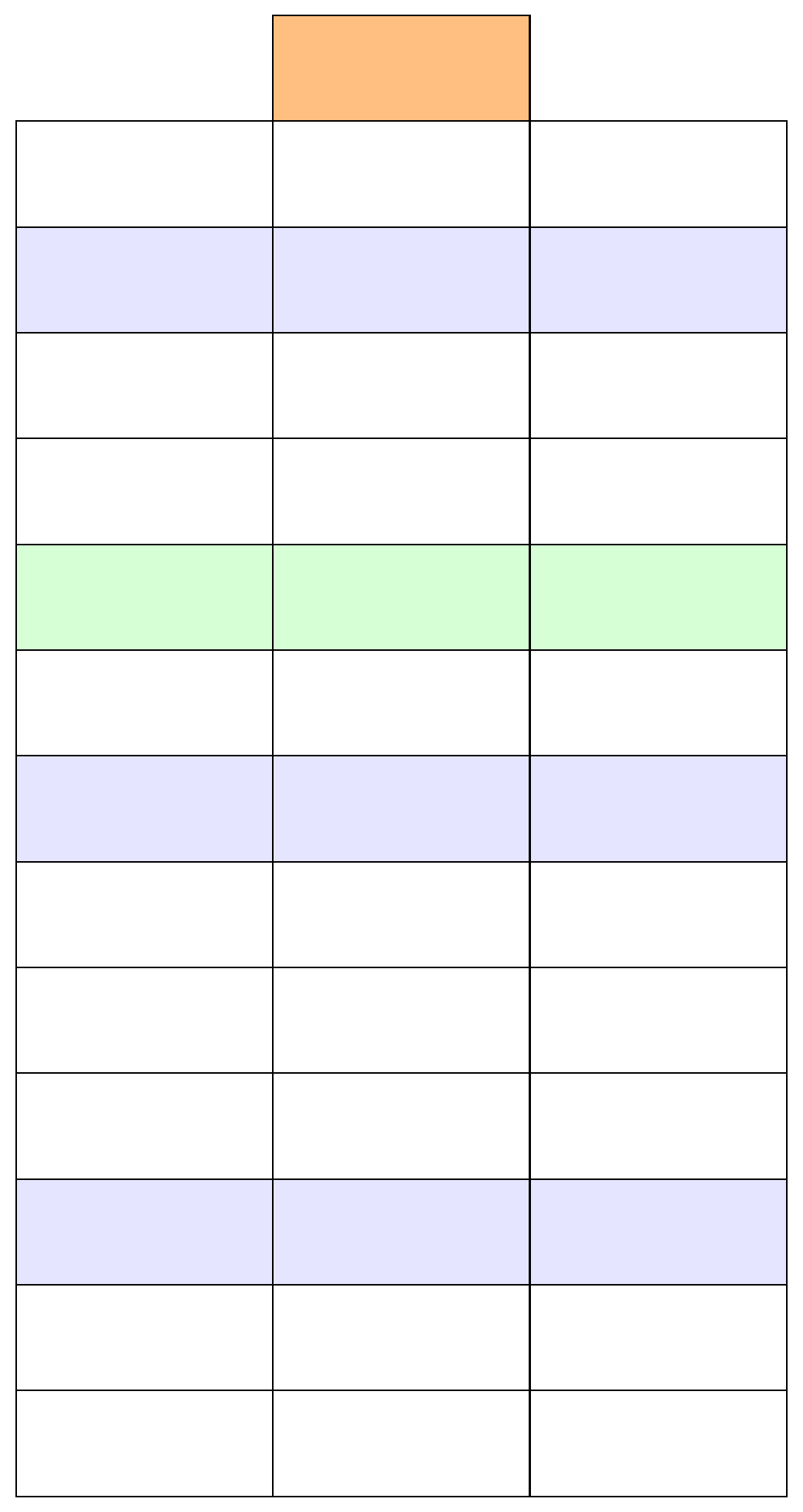} & \lower-33pt\hbox{\large $\;\; \longrightarrow \;\;$} & 
		\includegraphics[height=84mm]{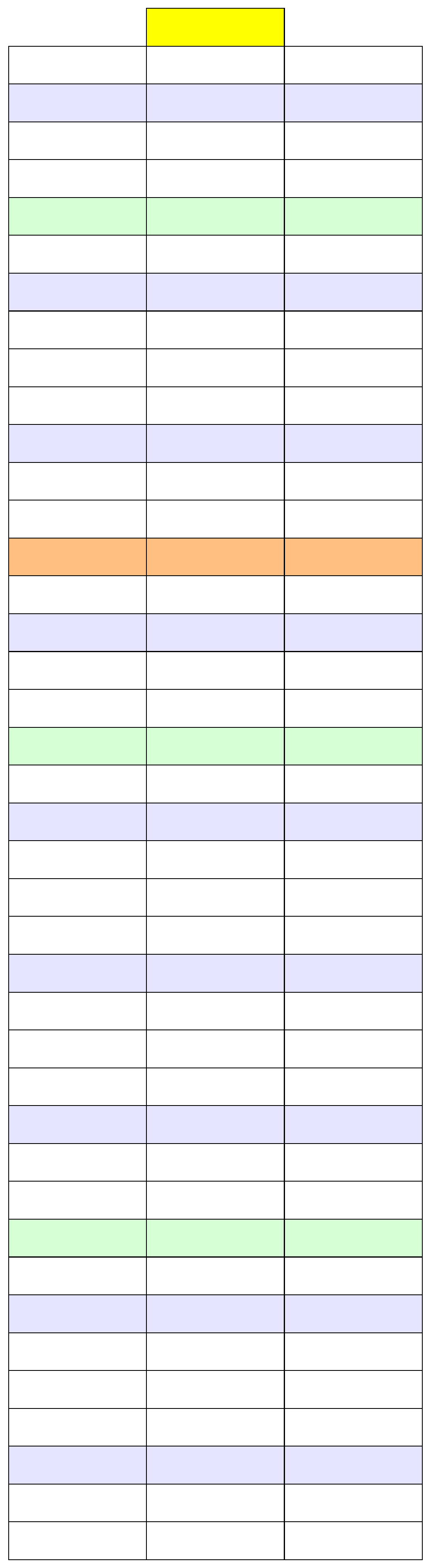}
	\end{tabular}
	\caption{Cutting-and-stacking construction: ${n = 3}$ and ${n = 4}$} 
	\label{fChacomMapCS3}
\end{figure}

\begin{constr}
We start with a unit segment $[0,1]$ interpreted as a Rokhlin tower $U_0$ of height ${h_0 = 1}$. 
Then we cut this segment twice, in three equal parts 
$$
	L_{1,0} = [0,1/3), \quad L_{1,1} = [1/3,2/3), \quad L_{1,2} = [2/3,1], 
$$
and add one additional ``level'', a segment $S_{1}$ of length $1/3$ which is drawn 
above the middle part $[1/3,2/3)$ (see~fig.~\ref{fChacomMapCS1}),  
\begin{equation*}
	\begin{alignedat}{3}
		&         & \quad & S_1     & \quad & \\ 
		& L_{1,0} & \quad & L_{1,1} & \quad & L_{1,2}
	\end{alignedat} 
\end{equation*}
Now we stack all these segments in the natural order: 
$L_{1,0}\,L_{1,1}\,S_{1}\,L_{1,2}$ and we get the next Rokhlin tower $U_1$ of height ${h_1 = 4}$ 
(see~fig.~\ref{fChacomMapCS2}). 
In other words, we assume that 
$$
	L_{1,0} \stackrel{T}\longrightarrow 
	L_{1,1} \stackrel{T}\longrightarrow 
	S_1     \stackrel{T}\longrightarrow 
	L_{1,2}, 
$$
and $T$ will be defined on $L_{1,2}$ on the next steps of the construction. 
We repeat the same procedure with the new tower: we cat it in three equal columns, 
put one additional level to the top of the middle column and stack together 
(fig.~\ref{fChacomMapCS1}--\ref{fChacomMapCS1}). 
 
At~each step of the construction we have a Rokhlin tower $U_n$ of height $h_n = (3^n-1)/2$. 
It can be easily checked that this sequence serves as an approximating sequence 
of Rokhlin towers in the definition of rank one transformation. 
\end{constr}
 
Note that if we draw all the additional level above the corresponding subcolumns 
without restacking the tower $U_n$ at the step~$n$ 
we come to the following representation of the Chacon map (see~fig.~\ref{fChacomMapCSTogether}). 

\begin{figure}[ht]
	\includegraphics[width=72mm]{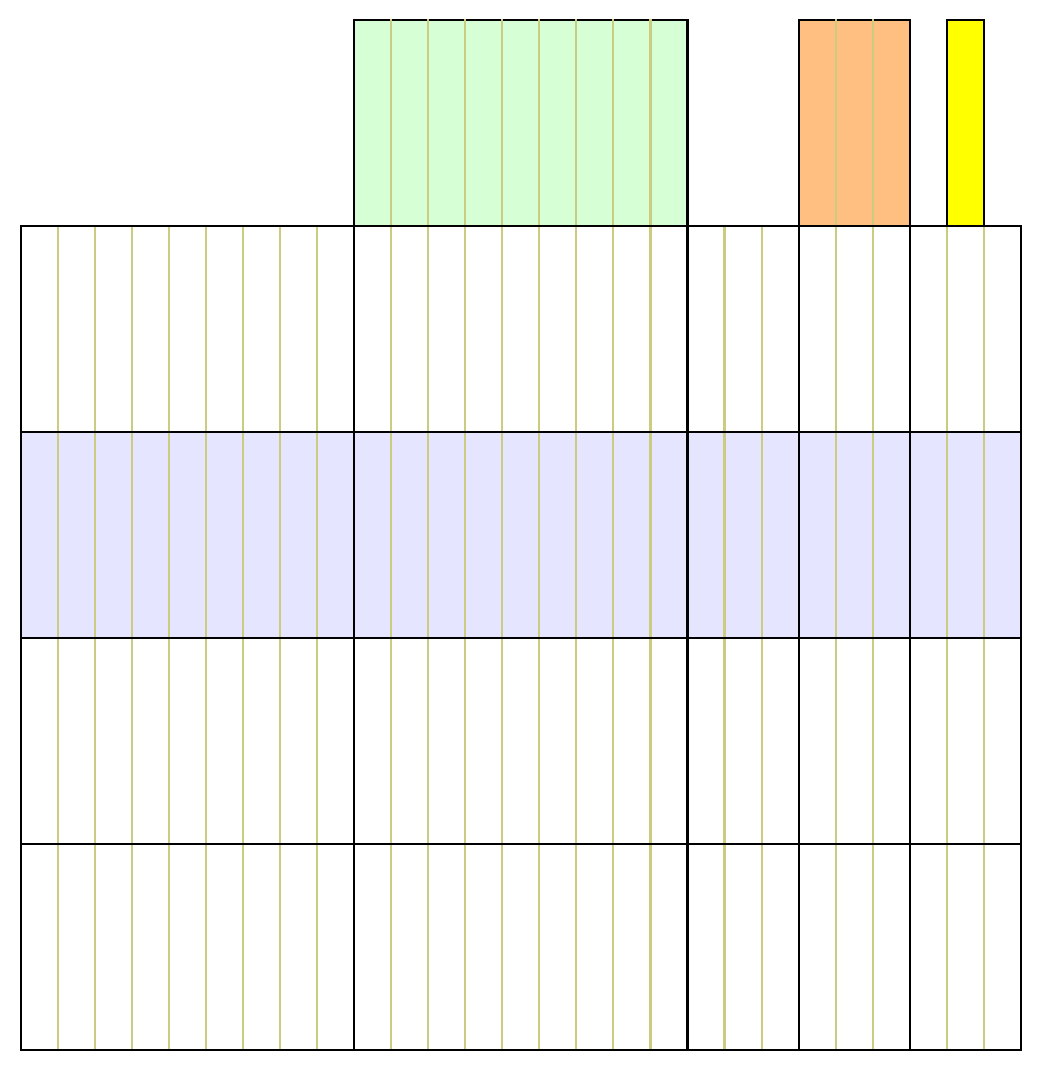} 
	\caption{Chacon$_{(3)}$ transformation without restacking} 
	\label{fChacomMapCSTogether}
\end{figure}

\begin{constr}
Let us consider the compact group of $3$-adic ingtegers $\G = \Set{Z}_{(3)}$. 
We associate $\G$ with the set of one-sided $3$-adic sequences 
$$
	y = (y_1,y_2,\dots,y_k,\dots), \qquad y_k \in \{0,1,2\}. 
$$
As a measure space $\G$ is isomorphic to the unit segment $[0,1]$ 
by the mapping 
$$
	y \mapsto \sum_{k=1}^\infty \frac{y_k}{3^k}. 
$$
It follows easily from the cutting-and-stacking construction that 
Chacon map $T$ is the integral transformation 
over the adding machine transformation 
$$
	S \Maps \G \to \G \Maps y \to y + 1 
$$
acting on the base level of the tower $U_n$ identified with $\G$ 
with the ceiling function ${r_n(y) = h_n + \phi_0(y)}$ (see~fig.~\ref{fChacomMap}), 
\begin{equation*}
	\phi_0(y) = \left\{ \begin{split}
		& 0, & & \text{if $y = \W{22}\dots\W{20*}$} \\ 
		& 1, & & \text{if $y = \W{22}\dots\W{21*}$} 
	\end{split} \right. 
\end{equation*}
where \W{*} indicates any symbol in alphabet $\{0,1,2\}$ if put inside a~block, 
and any infinite sequence of symbols if it ends the block. 
\end{constr}

\begin{figure}[ht]
	\includegraphics[width=75mm]{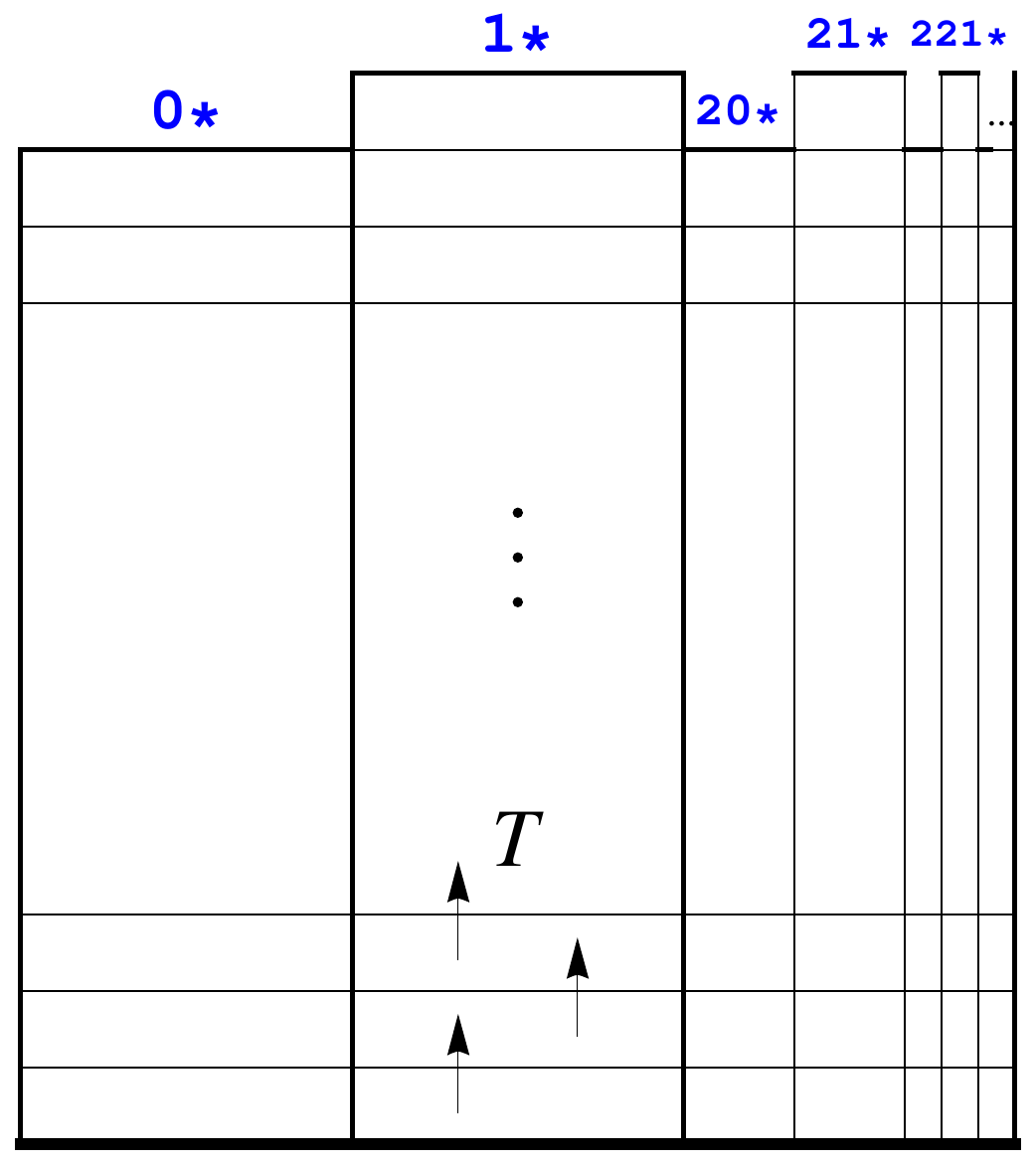} 
	\caption{Chacon$_{(3)}$ transformation: cutting-and-stacking construction and cocycle $\phi_0(y)$.}  
	\label{fChacomMap}
\end{figure}
 
Now we are ready to finish the proof of lemma~\ref{lemOneHalf}. 
It~can~be easily checked that any measurable function $f \in L^2(X,\mu)$ 
is approximated by functions constant on levels of a~tower in the rank one representation. 
So, assume that $f$ is constant on the levels of~$U_n$. 
Let us partition $U_n$ 
into sets $U_n^{(0)}$ and $U_n^{(1)}$ according to the value of the cocycle~$\phi_0(y)$, 
where $y$ is considered as a point in the base of~$U_n$. 
We see that 
$$
	f(T^{h_n}x) = f(x), \quad \text{if} \quad x \in U_n^{(0)}
$$
and
$$
	f(T^{h_n}x) = f(T^{-1}x), \quad \text{if} \quad x \in U_n^{(1)}
$$
for all points ${x \in U_n}$ except the first level $B_n$ of the tower~$U_n$ 
(observe that ${\mu(B_n) \to 0}$). 
Thus, 
$$
	\uT^{h_n} \to \frac{\I + \uT^{-1}}2, 
$$
since $\mu(U_n^{(0)}) = \mu(U_n^{(0)})$,
and applying conjugation we complete the proof. 
\end{proof} 

Analyzing the effects used in the proof we see that lemma~\ref{lemOneHalf} 
can be easily extended in the following way. 
Given $m \in \Set{N}$ let us consider the sum 
$$
	\phi_0^{(m)} = \phi_0(y) + \phi_0(Sy) + \dots \phi_0(S^{m-1}y) 
$$
and define the corresponding distribution $\rho_m$ of the values of~$\phi_0^{(m)}$. 
Actually $\rho_m$ is the measure on~$\Set{Z}$ with a~finite support 
which is the image by~$\phi_0^{(m)}$ of 
the Haar probability measure on~$\G$. 

\begin{lem}
For any ${m \in \Set{N}}$ the sequence $\uT^{-mh_n}$ converges weakly 
to a polynomial depending on~$\uT$, and 
$$
	P_m(\uT) \eqdef \lim_{n \to \infty} \uT^{-mh_n} = \int_{\Set{Z}} \uT^k \,d\rho_m(k). 
$$
\end{lem}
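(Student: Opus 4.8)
The plan is to reduce the statement to the computation already carried out in Lemma~\ref{lemOneHalf}, replacing the single cocycle $\phi_0$ by its $m$-fold Birkhoff sum $\phi_0^{(m)}$ along the adding machine $S$. Since the functions constant on the levels of a fixed tower $U_N$ are dense in $L^2(X,\mu)$, and the operators $\uT^{-mh_n}$ are unitary (hence uniformly bounded), it suffices to prove ${\scpr<\uT^{-mh_n}f,g> \to \scpr<P_m(\uT)f,g>}$ for $f,g$ constant on the levels of some $U_N$, the general case then following by polarization and a standard density argument. I~would work throughout in the representation of $T$ as the integral transformation over $S$ on $\G = \Set{Z}_{(3)}$ with ceiling ${r_n(y) = h_n + \phi_0(y)}$, writing a point as a pair $(y,\ell)$ with $y \in \G$ and ${0 \le \ell < r_n(y)}$; the crucial structural feature is that in these coordinates the measure is the normalized product of the Haar measure on the base $\G$ and the counting measure on the levels.

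First I would establish the pointwise description of the iterate. Exactly as in the proof of Lemma~\ref{lemOneHalf}, one full tower height sends $(y,\ell)$ to ${(Sy,\ \ell-\phi_0(y))}$ whenever the image level remains in the bulk of the column. Iterating this $m$ times and telescoping with the cocycle identity ${\phi_0^{(m)}(y) = \phi_0(y)+\phi_0(Sy)+\dots+\phi_0(S^{m-1}y)}$, I~obtain
\[
	T^{-mh_n}(y,\ell) = \bigl(S^{-m}y,\ \ell + \phi_0^{(m)}(S^{-m}y)\bigr)
\]
off an exceptional set. Because $\phi_0^{(m)}$ is bounded by $m$, the level is displaced by at most $m$, so the exceptional set — points within distance $m$ of the top or bottom of a column, together with the carry set of $S^{-m}$ — has measure $O(m/h_n) \to 0$.

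The heart of the argument is then the averaging. In the tower coordinates a function constant on the levels of $U_N$ becomes, for $n > N$, a function of the height $\ell$ alone (the $U_N$-level of $(y,\ell)$ is determined by $\ell$ through the nested cutting-and-stacking bookkeeping, independently of the base address $y$), whereas the shift ${k = \phi_0^{(m)}(S^{-m}y)}$ is a function of the base alone and, since $S$ preserves the Haar measure, has law exactly $\rho_m$. Because the measure on $X$ is a product, the base integral and the level integral decouple, and partitioning $X$ according to the value of $k$ gives
\[
	\scpr<\uT^{-mh_n}f,g> \longrightarrow \sum_{k} \rho_m(\{k\})\,\scpr<\uT^{k}f,g> = \scpr<P_m(\uT)f,g>,
\]
which is precisely ${P_m(\uT) = \int_{\Set{Z}}\uT^k\,d\rho_m(k)}$.

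The step I expect to be the real obstacle is the technical control underlying this decoupling rather than the averaging itself: one must verify the nested-spacer bookkeeping that turns $U_N$-level functions into genuine level functions in the $U_n$-picture and makes the iterate formula hold with exactly the cocycle $\phi_0^{(m)}$, and one must show that the coupling introduced by the constraint ${0 \le \ell + k < r_n(S^{-m}y)}$ at the $O(m)$ extreme levels, together with the odometer carries, contributes only $O(m/h_n) \to 0$. Conceptually the content is that the bounded cocycle lives on the base factor, so its fluctuations average against $\rho_m$ instead of collapsing to a single power; this is also why $P_m$ is \emph{not} the naive guess $P_1(\uT)^m$ — the summands $\phi_0(S^j y)$ are far from independent — in agreement with the non-binomial quadratic limits of Lemma~\ref{lemQuadratic}.
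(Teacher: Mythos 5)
Your proposal is correct and is essentially the paper's own argument: the paper proves this lemma by the very same scheme (stated only as a sketch, with details delegated to~\cite{PR}), namely that passing the tower $U_n$ a total of $m$ times accumulates, on top of $mh_n$, the cocycle values $\phi_0(y),\phi_0(Sy),\dots,\phi_0(S^{m-1}y)$, so that the limit averages the powers $\uT^k$ against the law $\rho_m$ of the Birkhoff sum $\phi_0^{(m)}$. Your skyscraper computation, the density reduction to level functions, and the $O(m/h_n)$ exceptional-set control are precisely the fleshing-out of that sketch along the lines of the paper's detailed proof of Lemma~\ref{lemOneHalf}.
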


The scheme of the proof can be found in~\cite{PR}, and the idea can be explained as follows. 
Passing the tower $U_n\:$ $m$~times we count (in~addition to~$mh_n$) the values 
of the cocycle~$\phi_0(y)$ at the points 
$$
	\phi_0(y), \quad \phi_0(Sy), \quad \dots \dots \phi_0(S^{m-1}y). 
$$

Let us consider several first polynomials $P_n(\uT)$: 
\begin{gather*}
	P_1(\uT) = \frac12(\I + \uT) \\ 
	P_2(\uT) = \frac16(\I + 4\uT + \uT^2) \\ 
	P_3(\uT) = \frac12(\uT + \uT^2) \\ 
	P_4(\uT) = \frac19(2\uT + 5\uT^2+2\uT^3) \\ 
	P_5(\uT) = \frac1{18}(\uT + 8\uT^2+8\uT^3+\uT^4) 
\end{gather*}
Since the weak closure $\WCl(\{\uT^j\})$ is invariant under multiplication by $\uT^s$ 
for any ${s \in \Set{Z}}$ we can reduce the polynomials $P_m(\uT)$ by 
the smallest power $l_m$ of~$\uT$ in~$P_m(\uT)$. Set 
$$
	\tP_m(z) = z^{-l_m} \cdot P_m(z). 
$$
Let us represent $\tP_m(z)$ in the form 
$$
	\tP_m(z) = a_{m,0} + a_{m,1} z + \dots + a_{m,d(m)} z^{d(m)}, 
$$
where $d(m) = \deg \tP_m(z)$. 

\begin{lem}
The coefficients $a_{m,j} \in \Set{Q}$ satisfy the following Markov property 
$$
	\sum_{j=0}^{d(m)} a_{m,j} = 1, \quad \text{and} \quad a_{m,j} \ge 0.
$$
\end{lem}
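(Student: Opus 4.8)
The plan is to show that $\tilde P_m(z)$ arises as a probability-weighted average of powers of $z$, which forces the coefficients to sum to one and be nonnegative. By the previous lemma, $P_m(\uT) = \int_{\Set{Z}} \uT^k \, d\rho_m(k)$, where $\rho_m$ is the pushforward under $\phi_0^{(m)}$ of the Haar probability measure on $\G = \Set{Z}_{(3)}$. The key observation is that $\rho_m$ is, by construction, a genuine probability measure on $\Set{Z}$: it is the image of a probability measure under a measurable map, hence $\rho_m(\Set{Z}) = 1$ and $\rho_m(\{k\}) \ge 0$ for every $k$.

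First I would write out the expansion of $P_m(z)$ explicitly in terms of $\rho_m$, namely $P_m(z) = \sum_{k} \rho_m(\{k\}) \, z^k$, so that the coefficient of $z^k$ in $P_m$ is exactly the atom $\rho_m(\{k\})$. Since $\phi_0^{(m)}$ takes values in a finite subset of $\Set{Z}$ (it is a sum of $m$ terms each in $\{0,1\}$, so its range lies in $\{0,1,\dots,m\}$), the measure $\rho_m$ has finite support and $P_m$ is a genuine polynomial in $z$. Next I would pass from $P_m$ to the reduced polynomial $\tilde P_m(z) = z^{-l_m} P_m(z)$: multiplying by $z^{-l_m}$ merely relabels the exponents $k \mapsto k - l_m$ and does not alter the coefficient values, so the coefficients $a_{m,j}$ of $\tilde P_m$ are precisely the nonzero atoms $\rho_m(\{j + l_m\})$, just reindexed. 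This immediately gives both $a_{m,j} \ge 0$ and $\sum_j a_{m,j} = \sum_k \rho_m(\{k\}) = \rho_m(\Set{Z}) = 1$.

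It remains to argue rationality, $a_{m,j} \in \Set{Q}$. Here I would use that the Haar measure on $\Set{Z}_{(3)}$ assigns to each cylinder set $\{y : y_1 = c_1, \dots, y_N = c_N\}$ the value $3^{-N}$, and that each level set $\{\phi_0^{(m)} = k\}$ is a finite union of such cylinders (the value of $\phi_0$ depends only on finitely many initial $3$-adic digits, and the action of $S^j$ for $j < m$ disturbs only boundedly many digits). Consequently each atom $\rho_m(\{k\})$ is a finite sum of terms of the form $3^{-N}$, hence rational. This is consistent with the explicit examples $P_1,\dots,P_5$ listed above, whose coefficients all have denominators that are powers of $3$ (up to the normalizing factors).

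The main obstacle is the rationality and finite-support bookkeeping: one must verify carefully that $\phi_0^{(m)}$ is determined by only finitely many digits of $y$, uniformly over the relevant shifts $S^j y$, so that the level sets are genuinely cylinder unions of Haar measure a dyadic-in-base-$3$ rational. The Markov (probability simplex) conditions $a_{m,j} \ge 0$ and $\sum_j a_{m,j} = 1$ are essentially immediate from the probabilistic interpretation and require no real work; the only subtlety is confirming that the weak limit is literally the integral against $\rho_m$ rather than some signed or subprobability combination, which is already guaranteed by the preceding lemma.
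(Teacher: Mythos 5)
Your treatment of the two displayed properties is correct, and it is exactly the paper's (implicit) justification: the paper states this lemma without a separate proof because, once $P_m(\uT)=\int_{\Set{Z}}\uT^k\,d\rho_m(k)$ is established with $\rho_m$ the image of the Haar probability measure under $\phi_0^{(m)}$, the coefficients of $P_m$ are the atoms of a finitely supported probability measure, and passing to $\tP_m(z)=z^{-l_m}P_m(z)$ merely shifts the indexing. Hence $a_{m,j}\ge 0$ and $\sum_j a_{m,j}=1$ are immediate, as you say.

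The gap is in your argument for $a_{m,j}\in\Set{Q}$. You claim each level set $\{\phi_0^{(m)}=k\}$ is a \emph{finite} union of cylinders because $\phi_0$ ``depends only on finitely many initial $3$-adic digits'' and because $S^j$, $j<m$, ``disturbs only boundedly many digits.'' Both claims are false for this cocycle. The value $\phi_0(y)$ is read off from the first digit following the initial run of \W{2}'s, and that run can be arbitrarily long, so $\phi_0$ is not measurable with respect to any fixed finite block of coordinates; for instance
$$
	\{\phi_0=0\}=\bigcup_{k\ge 0}\{y : y_1=\dots=y_k=\W{2},\ y_{k+1}=\W{0}\}
$$
is an infinite union of cylinders, whose measure $\sum_{k\ge0}3^{-(k+1)}=\tfrac12$ is obtained only by summing a series. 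Likewise, adding $j<m$ can push a carry through an arbitrarily long run of \W{2}'s and alter unboundedly many digits. Your intermediate conclusion --- that each atom of $\rho_m$ is a finite sum of numbers $3^{-N}$, hence has denominator a power of $3$ --- is contradicted already by $\tP_2(z)=\tfrac16(1+4z+z^2)$: the atom $\tfrac16$ cannot be written as a finite sum of powers of $\tfrac13$.

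Rationality does hold, but needs different bookkeeping. Fix $L$ with $3^L\ge m$ and split $y=(x,y')$ into the integer $x\in\{0,\dots,3^L-1\}$ formed by the first $L$ digits and the tail $y'$. Then $S^jy$ has first block $(x+j)\bmod 3^L$ and tail $y'+c_j$ with carry $c_j\in\{0,1\}$. The blocks $(x+j)\bmod 3^L$, $0\le j<m$, are pairwise distinct, so at most one of them is the all-\W{2} word; for that exceptional $j$ one checks $x+j=3^L-1$, hence $c_j=0$ and $\phi_0(S^jy)=\phi_0(y')$, while for every other $j$ the value $\phi_0(S^jy)$ is a function of $x$ (and $j$) alone. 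Therefore $\phi_0^{(m)}(y)=F(x)+G(x)\,\phi_0(y')$ with $G(x)\in\{0,1\}$; since under Haar measure $x$ is uniform over $3^L$ values, independent of $y'$, and $\phi_0(y')$ takes the values $0,1$ with probability $\tfrac12$ each, every atom of $\rho_m$ lies in $\tfrac{1}{2\cdot 3^L}\Set{Z}$. This yields $a_{m,j}\in\Set{Q}$, with the denominator bound $2\cdot3^L$ consistent with the paper's later hypothesis on $\Pz_m$.
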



\begin{figure}[ht]
	\includegraphics[width=75mm]{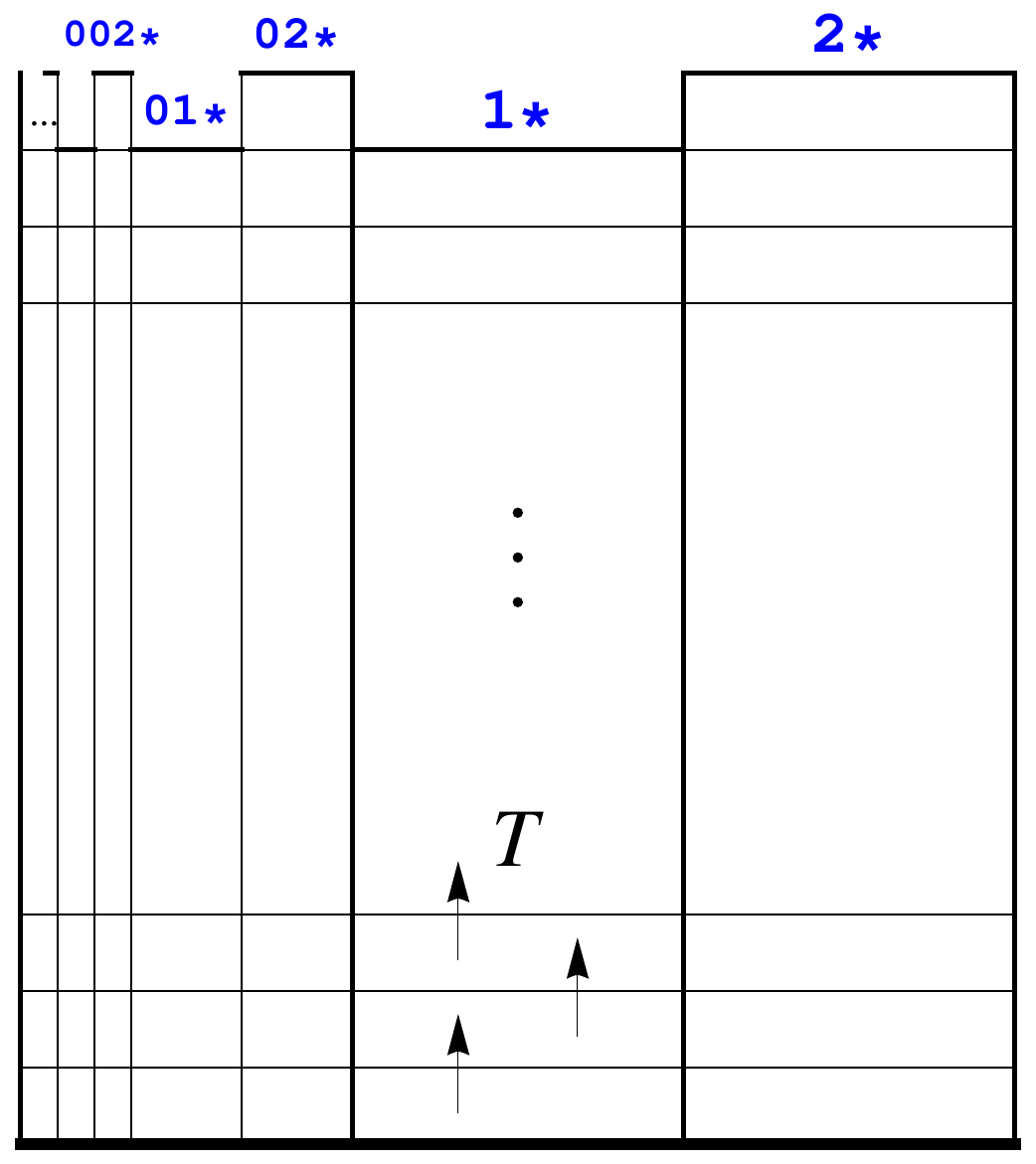} 
	\caption{Chacon$_{(3)}$ transformation after the coordinate change ${y \mapsto y+1}$ 
		and cocycle $\phi(y)$.} 
	\label{fChacomMapMod}
\end{figure}


In~table~1 of the Appendix we list the first $365$ polynomials $\tP_m(z)$. 

Let us discuss several remarks explaining the structure of this table. 
%
First, for simplicity of calculations 
we apply the transform ${y \mapsto y+}$ to the base of the tower $U_n$ and consider 
the following cocycle $\phi(y)$ insteed of~$\phi_0(y)$ (see~fig.~\ref{fChacomMapMod}),  
\begin{equation*}
	\phi(y) = \left\{ \begin{split}
		& 0, & & \text{if $y = \W{00}\dots\W{01*}$} \\ 
		& 1, & & \text{if $y = \W{00}\dots\W{02*}$} 
	\end{split} \right. 
\end{equation*} 
The function $\phi(y)$ is more convenient for calculation of the iterates $\phi(S^k y)$. 
 
\begin{lem}
For any power $3^\ell$ of three we have 
\begin{equation*}
	\phi^{(3^\ell)}(y) = \left\{ \begin{split}
		& 0, & & \text{if $y = \W{*}^\ell(\W{0})\W{1*}$} \\ 
		& 1, & & \text{if $y = \W{*}^\ell(\W{0})\W{1*}$} 
	\end{split} \right. 
\end{equation*} 
where the notation $(\W{0})$ is used for any sequense of zeros (including empty sequence), 
and $\W{*}^\ell$ denotes an arbitrary word of length~$\ell$. 
An equivalent way how we can state this property of the cocycle is to say that 
$$
	\phi^{(3^\ell)}(y) = \phi(A y), 
$$
where $A$ is the non-invertible left shift: 
$$
	A(y_1y_2\cdots y_k\cdots) = y_2 y_3 \cdots y_{k+1} \cdots
$$
\end{lem}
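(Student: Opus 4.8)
The plan is to argue by induction on $\ell$, reducing the cocycle taken over a block of $3^{\ell+1}$ steps to three blocks of length $3^\ell$ and exploiting the self-similarity of the base odometer $S$ under the one-sided shift $A$. The arithmetic backbone is the commutation relation
$$
	A\, S^{3} = S\, A, \qquad\text{so that}\qquad A^\ell\, S^{3^\ell} = S\, A^\ell,
$$
which I would obtain from the elementary digit identity $y = \sum_{i=1}^{\ell} y_i\,3^{i-1} + 3^\ell\,(A^\ell y)$: adding $3^\ell$ gives $y + 3^\ell = \sum_{i=1}^{\ell} y_i\,3^{i-1} + 3^\ell\,(A^\ell y + 1)$, hence adding $3^\ell$ leaves the first $\ell$ digits untouched and applies exactly one odometer step $S$ to the tail $A^\ell y$.

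Next I would record the block decomposition of the cocycle,
$$
	\phi^{(3^{\ell+1})}(y) = \sum_{j=0}^{2}\phi^{(3^\ell)}\bigl(S^{\,j\cdot 3^\ell}y\bigr),
$$
and combine it with the commutation relation to rewrite the three inner arguments through successive odometer iterates $A^\ell y,\ S A^\ell y,\ S^2 A^\ell y$ of the depth-$\ell$ tail. Feeding in the inductive hypothesis, which says that $\phi^{(3^\ell)}$ equals a constant $c_\ell$ plus the value of $\phi$ read one digit deeper, collapses the right-hand side to $3c_\ell + \phi^{(3)}(A^\ell y)$, so that the whole statement reduces to the single self-similarity identity $\phi^{(3)}(z) = 1 + \phi(Az)$ for the base case $\ell = 1$.

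I would then verify the base case by splitting on the leading digit $z_1 \in \{0,1,2\}$ and tracking the carry. When $z_1 = 0$ the iterates $z, Sz, S^2z$ have leading digits $0,1,2$, so $\phi$ evaluates to $\phi(Az),\,0,\,1$ and sums to $1 + \phi(Az)$. The structural reason behind this is that as $k$ runs over a full block of length $3^\ell$ the top $\ell$ digits pass through all $3^\ell$ residues exactly once, and $\phi(S^k y)$ is then decided by the top block alone---contributing the fixed constant $c_\ell = \sum_{p\neq 0}\phi(p)$, which one computes to be $(3^\ell-1)/2$ and which coincides with the reduction power $l_{3^\ell}$---except at the single instant when the top block is all zeros, where $\phi$ instead reads the tail and returns $\phi(A^\ell y)$.

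The step I expect to be the main obstacle is precisely this solitary all-zeros instant together with the carry that accompanies the rollover: according to whether the initial top block $(y_1,\dots,y_\ell)$ vanishes or not, the tail is read as $\phi(A^\ell y)$ or as $\phi(S A^\ell y)$, and closing the induction requires bookkeeping this boundary term cleanly rather than letting it drift. Since $A^\ell$ preserves the Haar measure on $\G$ and $S$ preserves it as well, both alternatives are uniformly distributed on $\{0,1\}$ and independent of the top block, so the two options merely interchange the two equally likely values; this is what upgrades the pointwise description into the stated identity $\phi^{(3^\ell)} = \phi\circ A^\ell$ (modulo the constant $c_\ell$) and yields the two-valued law, i.e. $\tP_{3^\ell}(z) = \tfrac12(1+z)$.
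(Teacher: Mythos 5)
The paper states this lemma without proof --- it is simply asserted and then used to conclude that $\tP_{3m}(z)=\tP_m(z)$ --- so there is no authors' argument to compare yours against; judged on its own terms, your proposal is correct and essentially complete. Both of your mechanisms work: the induction via the commutation relation $AS^{3}=SA$ (hence $A^{\ell}S^{3^{\ell}}=SA^{\ell}$) combined with the block decomposition $\phi^{(3^{\ell+1})}=\sum_{j=0}^{2}\phi^{(3^{\ell})}\circ S^{j3^{\ell}}$, and, even more directly, your ``full sweep'' observation that as $k$ runs over $\{0,\dots,3^{\ell}-1\}$ the leading $\ell$-block of $S^{k}y$ visits every residue exactly once, contributing the constant $\sum_{p\neq 0}\phi(p)=(3^{\ell}-1)/2$, while the tail is consulted only at the single all-zeros instant. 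I checked that your carry bookkeeping does close the induction: in every case one lands on $\phi^{(3^{\ell+1})}(y)=c_{\ell+1}+\phi(S^{\epsilon}A^{\ell+1}y)$ with $c_{\ell+1}=3c_{\ell}+1=(3^{\ell+1}-1)/2$ and $\epsilon\in\{0,1\}$ equal to the indicator that the leading $(\ell+1)$-block of $y$ is nonzero, which is exactly your inductive statement one level up.

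Your analysis moreover corrects the lemma as printed, which is false pointwise: taking $\ell=1$ and $y=4$ (ternary digits $1,1,0,\dots$, least significant first), one gets $\phi^{(3)}(4)=\phi(4)+\phi(5)+\phi(6)=0+1+1=2$, whereas the displayed formula predicts $\phi(A\,4)=\phi(1)=0$. The true identity is the one you derive, $\phi^{(3^{\ell})}(y)=\tfrac{3^{\ell}-1}{2}+\phi(S^{\epsilon}A^{\ell}y)$, and, as you note, this is exactly what the paper needs: the additive constant is absorbed by the reduction $\tP_{m}(z)=z^{-l_{m}}P_{m}(z)$ (indeed $l_{3^{\ell}}=(3^{\ell}-1)/2$, matching the $l_s$ of the quadratic lemma), and since $S$ preserves Haar measure and the tail digits are independent of the leading block, the carry $S^{\epsilon}$ does not alter the distribution $\rho_{3^{\ell}}$, so $\tP_{3^{\ell}}(z)=\tfrac12(1+z)$ as required. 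Two slips of wording only: your inductive hypothesis should say ``$\phi$ read $\ell$ digits deeper,'' not one digit; and the collapse to $3c_{\ell}+\phi^{(3)}(A^{\ell}y)$ should carry $S^{\epsilon}A^{\ell}y$ in the argument --- precisely the boundary term you flag in your last paragraph and then handle correctly.
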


It follows immediately from this lemma that polynomials $\tP_m(z)$ repeat 
after multiplication by~$3$, 
$$
	P_{3m}(z) = P_m(z). 
$$

\begin{thm}\label{thmExist}
For any $d \in \Set{N}$ the family $\{\tP_m\}$ contains infinitely many polynomials of degree~$d$. 
\end{thm}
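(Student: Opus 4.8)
The plan is to promote the cocycle computation behind $P_{3m}=P_m$ to a full recursion on the base-$3$ digits of $m$, read the degree off this recursion, and then encode $\tP_m$ as a product of two fixed $2\times 2$ matrices whose non-commutativity forces infinitely many distinct values.

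First I would record the digit recursion for the generating polynomials $P_m(z)=\int z^{\phi^{(m)}}\,d\mu$. Splitting the window $y,y+1,\dots,y+m-1$ by residues modulo~$3$, the elements $\equiv 1$ contribute $0$, those $\equiv 2$ contribute $1$, and those $\equiv 0$, written $3s$, contribute $\phi(s)$ and run over a block of $\lfloor m/3\rfloor$ or $\lfloor m/3\rfloor+1$ consecutive $s$; averaging over the three equiprobable classes $y\bmod 3$ gives, with $q=\lfloor m/3\rfloor$,
\begin{gather*}
  P_{3q}(z)=z^{q}P_{q}(z),\\
  P_{3q+1}(z)=\tfrac{z^{q}}3\bigl[P_{q+1}(z)+(1+z)P_{q}(z)\bigr],\\
  P_{3q+2}(z)=\tfrac{z^{q}}3\bigl[(1+z)P_{q+1}(z)+zP_{q}(z)\bigr],
\end{gather*}
which one checks against $P_1,\dots,P_5$ and against $P_{3m}=P_m$.

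Second I would extract the degree. Writing $[\alpha_m,\beta_m]$ for the support of $P_m$ and using $\phi^{(q+1)}=\phi^{(q)}+\phi(\,\cdot\,+q)$ with the last summand in $\{0,1\}$, one has $\alpha_q\le\alpha_{q+1}\le\alpha_q+1$ and $\beta_q\le\beta_{q+1}\le\beta_q+1$; substituting into the three cases yields
$$
  d(3q)=d(q),\qquad d(3q+1)=d(q)+1,\qquad d(3q+2)=d(q+1)+1 .
$$
When every base-$3$ digit of $m$ is $0$ or $1$ only the first two relations occur (no carries), so $d(m)$ equals the number of $1$'s. Hence for each $d$ the infinitely many integers $m=3^{a_1}+\dots+3^{a_d}$ with $0=a_1<a_2<\dots<a_d$ satisfy $\deg\tP_m=d$, settling existence.

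Third comes distinctness, which I expect to be the main obstacle. Iterating the recursion downward writes the column $(P_m,P_{m+1})^{\top}$, up to an overall power of $z$ that disappears under the normalization $\tP_m=z^{-\alpha_m}P_m$, as a product of one matrix per digit,
$$
  M_0=\begin{pmatrix}1&0\\ \tfrac{1+z}3&\tfrac13\end{pmatrix},
  \qquad
  M_1=\tfrac13\begin{pmatrix}1+z&1\\ z&1+z\end{pmatrix},
$$
with exactly $d$ factors $M_1$ (one per $1$-digit). Since $M_0$ has eigenvalues $1$ and $1/3$, a direct computation gives $M_0^{k}=A_\infty+3^{-k}C$ for fixed $A_\infty,C$, so enlarging one gap between consecutive $1$-digits by $k$ turns $\tP_m$ into $A(z)+3^{-k}B(z)$ with $A,B$ independent of $k$. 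The crux is to verify that $B\not\equiv 0$ (non-degeneracy of the correction, which requires that $M_0$ and $M_1$ fail to commute in the relevant component and that reduction by $z^{\alpha_m}$ does not kill it); granting this, the scalars $3^{-k}$ are distinct, so the polynomials are pairwise different and we obtain infinitely many of degree~$d$. The case $d=2$ is exactly Lemma~\ref{lemQuadratic}, where $\tP_{3^{a}+1}=\tfrac14(1+z)^2-\tfrac{3^{-a}}4(1-z)^2$ are manifestly distinct, and as all gaps grow $\tP_m\to\bigl(\tfrac{1+z}2\bigr)^{d}$, which I would use as a guide for the general non-degeneracy argument.
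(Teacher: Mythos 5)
Your construction is at heart the same as the paper's: the paper's proof also works with configurations whose base-$3$ expansion consists of $d$ ones separated by long blocks of zeros, and also identifies $\frac{1}{2^d}(\I+\uT)^d$ as the limit as the gaps grow. What differs is the implementation: the paper argues through an informal ``almost independence'' of the shifted cocycle sums, whereas you derive an exact digit recursion. I checked your three recursion formulas (they reproduce $P_2,\dots,P_5$ and the tabulated $\tP_{10}$, $\tP_{13}$, $\tP_{91}$), the degree law $d(3q)=d(q)$, $d(3q+1)=d(q)+1$, $d(3q+2)=d(q+1)+1$ (legitimate because all coefficients are nonnegative, so no cancellation can occur), and the matrices $M_0$, $M_1$ together with $M_0^k=A_\infty+3^{-k}C$; all of this is correct, and it gives $\deg\tP_m=\#\{\text{ones}\}$ for \emph{every} $m$ with digits in $\{0,1\}$, for all gap sizes, which is sharper than the paper's ``for sufficiently big $\ell_j$''. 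Also, your worry about the normalization is unfounded: in the $\{0,1\}$-digit recursion both components of the vector always have positive constant term, so the first component of the matrix product is already $\tP_m$ itself and no division by a power of $z$ ever occurs.

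The genuine gap is exactly the one you flag: $B\not\equiv0$ is left unverified, and it cannot be waved away. Since $\tP_{3m}=\tP_m$, the only non-vacuous reading of the theorem is ``infinitely many \emph{distinct} polynomials of degree $d$'' (for $d=1$ that reading is actually false --- every $\tP_{3^a}=\frac12(1+z)$ --- which is why the abstract restricts to $d\ge2$); so distinctness is the substance of the claim, not a technicality. Fortunately your own machinery closes the gap. An easy induction on your recursion shows $2\cdot3^{N}P_m\in\Set{Z}[z]$ for some $N=N(m)$, i.e.\ every coefficient of every $\tP_m$ has denominator of the form $2\cdot3^{N}$; but the leading coefficient of the limit $\bigl(\frac{1+z}{2}\bigr)^d$ is $2^{-d}$, which is not of that form when $d\ge2$. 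Hence the binomial limit is attained by no $\tP_m$ whatsoever; since your degree-$d$ polynomials converge to it coefficientwise as the gaps grow while remaining of degree exactly $d$, they must take infinitely many distinct values. Equivalently, $B\equiv0$ would force $\tP_m$ to be independent of the gap $k$ and hence equal to the unattainable $k\to\infty$ limit. (One slip worth noting: your $d=2$ formula $\tP_{3^a+1}=\frac14(1+z)^2-\frac{3^{-a}}{4}(1-z)^2$ copies Lemma 1 of the paper, but your own recursion --- and the paper's table, e.g.\ $\tP_{10}=\frac1{54}(13z^2+28z+13)$ --- gives $\frac{3^{-a}}{12}$ in place of $\frac{3^{-a}}{4}$; the distinctness conclusion for $d=2$ is unaffected either way.)
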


\begin{proof}
The theorem is based on the following observation. If we consider 
functions $\phi^{(m)}(y)$ and $\phi(S^k y)$ as random variables defined on~$\G$, then 
$\phi^{(m)}(y)$ and $\phi(y)$ are almost independent. 
Thus, extending the proof of lemma~\ref{lemQuadratic} we see that 
configurations 
$$
	m(\ell_1,\dots,\ell_{d-1}) = \W{10}^{\ell_1}\W{10}^{\ell_2}\W{100}\dots\W{0}^{\ell_{d-1}}\W{1}_{3} 
$$
generates for sufficiently big $\ell_j$ polynomials $P_{(\ell_j)}(z)$ of degree~$d$ such that 
$$
	\lim_{\ell_j \to \infty} P_{(\ell_j)}(\uT) = \frac1{2^d}(\I + \uT)^d. 
$$
\end{proof}

To illustrate the construction used in the proof let us consider configuration 
$$
	m(\ell_1,\ell_2) = \W{10}^{\ell_1}\W{10}^{\ell_2}\W{1}_3 = 
	\W{1} \overbrace{\W{00}\dots\W{0}}^{\ell_1} \W{1} \overbrace{\W{000}\dots\W{0}}^{\ell_2} \W{1}_3
$$
Set
$$
	p^{-[i,j]} = p^{-i} + p^{-i-1} + \dots + p^{-j}, 
$$
and notice that $3^{-[1,\infty]} = 1/2$. 

\begin{lem} 
$\cP_{m(\ell_1,\ell_2)}$ is a self-reciprocal polynomial, 
$$
	\cP_{m(\ell_1,\ell_2)} = \gamma z^3 + (1/2 - \gamma)z^2 + (1/2 - \gamma)z + \gamma, 
$$
where
$$
	\gamma = 3^{-[1,\ell_1]} 3^{-[1,\ell_2]} + 
		3^{-[1,\ell_1]} 3^{-(\ell_2+1)} + 3^{-(\ell_1+1)} 3^{-[1,\ell_2]}. 
$$
\end{lem}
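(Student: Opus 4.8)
The plan is to work entirely with the distribution $\rho_m$ of the cocycle sum, since $\cP_{m}=\tP_m$ is read off from $P_m(z)=\int_{\Set{Z}}z^{k}\,d\rho_m(k)$ (the image of Haar measure $\nu$ on $\G$ under $\phi^{(m)}$) after dividing by the lowest power. Writing $m(\ell_1,\ell_2)=3^{i_2}+3^{i_1}+1$ with $i_1=\ell_2+1$ and $i_2=\ell_1+\ell_2+2$, I would first split the orbit of length $m$ along the three nonzero base‑$3$ digits of $m$ by the cocycle identity $\phi^{(a+b)}(y)=\phi^{(a)}(y)+\phi^{(b)}(S^ay)$, and then collapse each full block of length $3^{\ell}$ with the self‑similarity lemma. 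At the exact (non‑asymptotic) level this replaces $\phi^{(3^{\ell})}(\cdot)$ by $h_{\ell}+\phi(A^{\ell}\cdot)$ up to the boundary twist $\phi(A^{\ell}\cdot)\rightsquigarrow\phi(SA^{\ell}\cdot)$ which is active off the measure‑$3^{-\ell}$ set $\{v_{<\ell}=0\}$. The outcome is
$$
\phi^{(m)}(y)=h_{i_1}+h_{i_2}+X_0+X_1+X_2 ,
$$
where $X_0=\phi(y)$, $X_1=\phi^{(3^{i_1})}(Sy)-h_{i_1}$ and $X_2=\phi^{(3^{i_2})}(S^{1+3^{i_1}}y)-h_{i_2}$ each take values in $\{0,1\}$. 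Hence $\phi^{(m)}$ ranges over four consecutive integers, $\deg\tP_m\le 3$, and the four coefficients of $\tP_m$ are the four atoms of the law of $X_0+X_1+X_2$.

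Next I would force the self‑reciprocal shape. The elementary input is that the digit reflection $\iota\colon y\mapsto -1-y$ (i.e.\ $y_k\mapsto 2-y_k$) preserves $\nu$ and satisfies $\phi(\iota v)=1-\phi(Sv)$. Combining this with $S^{k}\iota=\iota S^{-k}$ and summing gives $\phi^{(m)}(\iota S^{m-2}y)=m-\phi^{(m)}(y)$, so the measure‑preserving map $\sigma=\iota\circ S^{m-2}$ obeys $\phi^{(m)}\circ\sigma=m-\phi^{(m)}$. Therefore $\rho_m(k)=\rho_m(m-k)$; since the support is the symmetric window around $m/2$, reduction turns this into the palindromic relation $\tP_m(z)=z^{3}\tP_m(1/z)$, forcing the coefficient vector into the shape $(\gamma,\tfrac12-\gamma,\tfrac12-\gamma,\gamma)$ once we invoke the normalization $\sum_j a_{m,j}=1$ from the Markov‑property lemma above. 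Thus only the single number $\gamma=\nu\{X_0=X_1=X_2=0\}$ remains to be computed.

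Finally, for $\gamma$ I would compute the joint law of the three indicators. Conditioning on $X_0=0$ (the first nonzero digit of $y$ equals $1$) pins the unit carry of $S$ inside the lowest block, so that $X_1$ and $X_2$ collapse to functions of the shifted tail $w=A^{i_1}y$ of the twisted form $\phi(Sw)$ and $\phi\!\left(SA^{\,i_2-i_1}Sw\right)$; keeping the twist $\phi(SA^{\ell}\cdot)$ rather than $\phi(A^{\ell}\cdot)$ is essential here. I would then split according to where the controlling digit of $y$ falls relative to the two block boundaries $i_1$ and $i_2$: one contribution when it lies in the interior of both gaps, and two boundary contributions when it sits exactly at $i_1$ or at $i_2$ so that a carry crosses the boundary. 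These three mutually exclusive scenarios produce the three products in the stated formula, each factor being either a geometric sum $3^{-[1,\ell]}$ over the interior of a gap of length $\ell$ or a single boundary weight $3^{-(\ell+1)}$. The hard part is precisely this carry bookkeeping across $i_1$ and $i_2$: the two boundaries are not independent, and one must simultaneously track which of ``first nonzero'' and ``first non‑$2$'' digit governs each of $X_1$ and $X_2$; casting this as a small three‑state transfer computation running along the digit positions is what converts the casework into the clean factorized sum for $\gamma$.
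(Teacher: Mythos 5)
Your first two steps are sound and go well beyond what the paper records (its entire proof of this lemma is the sentence that the argument is ``very close to that of Lemma~\ref{lemQuadratic}''). The reduction to the law of $X_0+X_1+X_2$ via $m=3^{i_2}+3^{i_1}+1$, the exact self-similarity with the carry twist, and the degree bound are all correct. Your reflection argument is also correct and is the nicest part: $\iota(y)=-1-y$ preserves Haar measure, $\phi(\iota v)=1-\phi(Sv)$ holds a.e.\ (check a leading digit $0$ or $1$ directly and recurse on a leading $2$), hence $\phi^{(m)}\circ(\iota S^{m-2})=m-\phi^{(m)}$ and $\rho_m(k)=\rho_m(m-k)$; since the support $\{h_{i_1}+h_{i_2},\dots,h_{i_1}+h_{i_2}+3\}$ is centred at $m/2$, the reduced polynomial is palindromic. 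For these $m$ this actually proves what the paper only conjectures in general (Hypothesis~\ref{H1}).

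The gap is the final step, and it is fatal as written. You never perform the computation of $\gamma=\Pr[X_0=X_1=X_2=0]$; you describe a three-scenario split and assert that it ``produces the three products in the stated formula''. It cannot: each displayed product has one factor per digit block below position $i_2$, i.e.\ it pays only for the constraints $X_0=0$ and $X_1=0$, while the constraint $X_2=0$, supported on the digits beyond $i_2$, costs an additional factor in every scenario --- $\tfrac12$ in the no-carry cases and $\tfrac16=\tfrac13\cdot\tfrac12$ in the carry cases; regrouping reproduces exactly the three stated products, each multiplied by $\tfrac12$. So a correct execution of your plan gives
$$
	\gamma \;=\; \frac12\Bigl[\,3^{-[1,\ell_1]}3^{-[1,\ell_2]}
		+3^{-[1,\ell_1]}3^{-(\ell_2+1)}+3^{-(\ell_1+1)}3^{-[1,\ell_2]}\Bigr]
$$
(in the indexing consistent with Theorem~\ref{thmIrrCubic}), i.e.\ one half of the stated value; the lemma as printed is off by a factor of $2$, besides an $\ell_j$-versus-$(\ell_j+1)$ mismatch between its formula and its own picture of the configuration. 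Your own step 1 already detects this: as $\ell_1,\ell_2\to\infty$ the $X_i$ become independent Bernoulli$(\tfrac12)$ variables, so $\gamma\to\tfrac18$, in agreement with the binomial limit $\tfrac1{2^3}(\I+\uT)^3$ of Theorem~\ref{thmExist}, whereas the stated formula tends to $\tfrac14$. The paper's data agree: for $m=91=\W{10101}_3$, Table~2 and the formula of Theorem~\ref{thmIrrCubic} give $\tP_{91}=\frac1{486}(56z^3+187z^2+187z+56)$, i.e.\ $\gamma=\tfrac{28}{243}$, while your claimed endpoint gives $\tfrac{56}{243}$. The ``carry bookkeeping'' you defer to a transfer-matrix afterthought is precisely where the factor $\tfrac12$ lives; carried out honestly, your argument refutes the formula as printed and proves the corrected one, and the proposal should have flagged that discrepancy rather than claiming to land on the statement.
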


\begin{proof}
The proof of this lemma is very close to that of lemma~\ref{lemQuadratic}. 
\end{proof}

In the next section we formulate a set of hypotheses concerning the properties 
of the limit polynomials $\tP_m(z)$. In~hypothesis~\ref{H1} we conjecture that 
all polynomials $\tP_m(z)$ are self-reciprocal, i.e.\ they have coefficients 
symmetric under the transform ${k \mapsto d-1}$, ${d = \deg \tP_m}$, 
$$
	\tP_m(z) = \sum_{j=0}^{d(m)} a_{m,j} z^j, \qquad a_{m,j} = a_{m,d(m)-j}. 
$$
In other words, the sequence $a_{m,j}$ is symmetric. 
%
 
\begin{lem}
If hypothesis~\ref{H1} holds then $d(m) \in 2\Set{Z}+1$ implies that $(-1)$ is a~root of $\tP_m$. 
In~particular, any polynomial $\tP_m$ of odd degree is factorized, 
$$
	\cP_m(z) = (z+1)\, R_m(z). 
$$ 
\end{lem}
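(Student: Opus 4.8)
The plan is to use the symmetry hypothesis directly to evaluate the polynomial at $z = -1$. Since Hypothesis~\ref{H1} asserts that the coefficient sequence is palindromic, $a_{m,j} = a_{m,d(m)-j}$, I would compute $\tP_m(-1) = \sum_{j=0}^{d(m)} a_{m,j}(-1)^j$ and show it vanishes when $d = d(m)$ is odd.

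First I would pair up the terms in the sum using the symmetry. For each index $j$, pair it with $d - j$; the corresponding coefficients are equal by hypothesis. The key observation is that when $d$ is odd, the contributions $(-1)^j$ and $(-1)^{d-j}$ have opposite signs, since $(-1)^{d-j} = (-1)^d(-1)^{-j} = -(-1)^{-j} = -(-1)^{j}$ (using $d$ odd, so $(-1)^d = -1$). Thus each pair $a_{m,j}\bigl((-1)^j + (-1)^{d-j}\bigr)$ contributes zero. Since $d$ is odd, there are $d+1$ terms, an even count, so the indices partition perfectly into such pairs with no fixed point (the potential fixed point $j = d/2$ does not occur for odd $d$). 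Hence $\tP_m(-1) = 0$, which is exactly the statement that $-1$ is a root.

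The factorization then follows immediately from the factor theorem: since $\tP_m(z)$ is a polynomial over $\Set{Q}$ (by the Markov-property lemma the coefficients are rational) and $z = -1$ is a root, $(z+1)$ divides $\tP_m(z)$, yielding $\cP_m(z) = (z+1) R_m(z)$ with $R_m \in \Set{Q}[z]$ of degree $d(m) - 1$.

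I do not anticipate a genuine obstacle here, as the argument is essentially the elementary fact that an odd-length palindromic polynomial always has $-1$ as a root. The only point requiring slight care is the bookkeeping on parities: one must confirm that for odd $d$ the number of terms is even so that the pairing is exhaustive, and that no self-paired middle term survives to spoil the cancellation. Everything else reduces to a one-line evaluation.
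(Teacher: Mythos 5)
Your proof is correct and is precisely the ``simple calculation'' the paper alludes to (the paper does not even write it out): evaluating the palindromic polynomial at $z=-1$, pairing $j$ with $d-j$, and noting the signs cancel when $d$ is odd, followed by the factor theorem over $\Set{Q}$. The parity bookkeeping you flag is handled correctly --- for odd $d$ there is no fixed point $j = d/2$ and the $d+1$ terms pair off exhaustively --- so nothing is missing.
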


The proof is a simple calculation. Nevertheless, we can ask a~question: 
is it the only way to factorize $\tP_m$? 

\begin{thm}\label{thmIrrCubic} 
The family of limit polynomials $\cP_m(z)$ contains infinitely many 
cubic polynomials for which ${R_m(z) = (z+1)^{-1}\tP_m(z)}$ are irreducible. 
\end{thm}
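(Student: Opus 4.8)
The plan is to run everything through the explicit two‑parameter family of cubic limit polynomials $\cP_{m(\ell_1,\ell_2)}$ furnished by the preceding lemma, whose palindromic form $\gamma z^3 + (\tfrac12 - \gamma) z^2 + (\tfrac12 - \gamma) z + \gamma$ is known in closed form through the parameter $\gamma = \gamma(\ell_1,\ell_2)$. First I would note that $\gamma > 0$, so these are genuinely cubic, and that substituting $z = -1$ into the palindrome gives $0$ directly; hence $(z+1)$ divides $\cP_{m(\ell_1,\ell_2)}$ without any appeal to the general self‑reciprocity hypothesis, and carrying out the division produces the quadratic cofactor
$$
	R_m(z) = \gamma z^2 + \Bigl(\tfrac12 - 2\gamma\Bigr) z + \gamma.
$$
Irreducibility of $R_m$ over $\Set{Q}$ then reduces to a discriminant test: a rational quadratic with nonzero leading coefficient is reducible exactly when its discriminant is a square in $\Set{Q}$, so it suffices to show that
$$
	\Delta = \Bigl(\tfrac12 - 2\gamma\Bigr)^2 - 4\gamma^2 = \tfrac14 - 2\gamma
$$
is negative, which forces a pair of complex‑conjugate roots and hence irreducibility.

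The second step is the estimate on $\gamma$. Inserting the elementary identity $3^{-[1,\ell]} = \tfrac12(1 - 3^{-\ell})$ into the lemma's expression for $\gamma$ and simplifying, one arrives at the closed form
$$
	\gamma = \tfrac14 - \tfrac1{12}\bigl(3^{-\ell_1} + 3^{-\ell_2} + 3^{-\ell_1-\ell_2}\bigr),
$$
whence
$$
	\Delta = -\tfrac14 + \tfrac16\bigl(3^{-\ell_1} + 3^{-\ell_2} + 3^{-\ell_1-\ell_2}\bigr).
$$
Since $3^{-\ell_1} + 3^{-\ell_2} + 3^{-\ell_1-\ell_2} \le \tfrac13 + \tfrac13 + \tfrac19 = \tfrac79 < \tfrac32$ for all $\ell_1,\ell_2 \ge 1$, the correction term never reaches $\tfrac14$, so $\Delta < 0$ throughout the family; in particular $\Delta \to -\tfrac14$ as $\ell_1,\ell_2 \to \infty$. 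Thus every $R_m$ arising from this family is irreducible over $\Set{Q}$.

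It remains to extract infinitely many distinct polynomials. From the closed form, fixing $\ell_1$ and letting $\ell_2$ range over the positive integers makes $3^{-\ell_1} + 3^{-\ell_2} + 3^{-\ell_1-\ell_2}$ strictly monotone in $\ell_2$, so $\gamma$ — and therefore the whole cubic $\cP_{m(\ell_1,\ell_2)}$ — takes infinitely many distinct values; combined with the uniform negativity of $\Delta$ this proves the theorem. The only point demanding genuine care is the algebraic simplification yielding the closed form for $\gamma$: one must track the three summands and the geometric‑series evaluation accurately enough to be sure the correction term stays \emph{strictly} below $\tfrac14$ rather than merely tending to it, so that irreducibility is secured for the entire tail of the family and not only in the limit.
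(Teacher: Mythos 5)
Your reduction to the quadratic cofactor $R_m(z)=\gamma z^2+(\tfrac12-2\gamma)z+\gamma$ and the identity $\Delta=\tfrac14-2\gamma$ are algebraically correct, and your closed form $\gamma=\tfrac14-\tfrac1{12}\bigl(3^{-\ell_1}+3^{-\ell_2}+3^{-\ell_1-\ell_2}\bigr)$ does faithfully evaluate the expression printed in the lemma you quote. The problem is that the printed expression for $\gamma$ is wrong by a factor of $2$ (it omits the weight $\tfrac12=3^{-[1,\infty]}$ carried by the trailing digit $\W{1}$ of the configuration), and your argument is exactly sensitive to that factor. Three independent checks inside the paper show that your conclusion $\Delta<0$ is false for the actual limit polynomials. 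First, your $\gamma$ tends to $\tfrac14$ as $\ell_j\to\infty$, so your cubics would converge to $\tfrac14(1+z)(1+z^2)$, contradicting Theorem~\ref{thmExist}, which makes them converge to $\tfrac18(1+z)^3$. Second, complex-conjugate roots would contradict Hypothesis~\ref{hypReal} and the remark that the roots are negative reals occurring in pairs $r,r^{-1}$. Third, and decisively, take the member of this family with $\ell_1=\ell_2$ corresponding to $m=91=\W{10101}_3$: the table gives $\tP_{91}(z)=\frac1{486}(56z^3+187z^2+187z+56)$, whose cofactor $\frac1{486}(56z^2+131z+56)$ has discriminant $131^2-4\cdot56^2=4617=3^5\cdot19>0$, so its roots are real and $\Delta<0$ fails. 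With the correct normalization (leading coefficient $(3a^2+2a)/(2\cdot3^{2\ell+1})$, where $3^{-[1,\ell]}=a/3^{\ell}$ --- the form the paper itself uses in its proof and which matches the tables), one finds instead $\Delta=\tfrac1{12}\bigl(3^{-\ell_1}+3^{-\ell_2}+3^{-\ell_1-\ell_2}\bigr)>0$ throughout the family.

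Consequently a sign test can never prove irreducibility here: what must be shown is that a \emph{positive} rational is not a square of a rational, and this requires an arithmetic input that your proposal lacks. That input is precisely the point of the paper's proof: after the substitution $z=-1+w$ the integer cofactor becomes $Xw^2+Yw-Y$ with $X=3a^2+2a$ and $Y+4X=3^{2\ell+1}$; since $X=a(3a+2)$ is prime to $3$, also $3\nmid Y$, so every prime $p\mid Y$ satisfies $p\nmid X$, and Eisenstein's criterion applies --- equivalently, the discriminant equals $3^{2\ell+1}Y$ and has odd $3$-adic valuation, hence is not a rational square. (In the example above, $X=56$, $Y=19$, discriminant $3^5\cdot19$.) This $3$-adic ingredient, not the sign of $\Delta$, is what makes the theorem true, so the proposal as it stands does not prove the statement.
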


\begin{proof}
Indeed, consider cubic polynomials given by configurations $\W{10}^{\ell_1}\W{10}^{\ell_2}\W{1}$ 
with $\ell_1 = \ell_2$ (see~table~2 of the Appendix). With a~simplified notation ${\ell = \ell_1}$ we have 
\begin{multline*}
	\cP_{m(\ell,\ell)}(z) = \\ = 
		\frac{(3a^2 + 2a)(z^3+1) + (3^{2\ell+1} - 3a^2 - 2a))(z^2+z)}
			{2\cdot 3^{2\ell+1}}, 
\end{multline*}
where
$$
	3^{-[1,\ell]} = \frac13 + \dots + \frac1{3^\ell} = \frac{a}{3^\ell}, \qquad \gcd(a,3) = 1. 
$$
Next, let us apply the transform ${z = -1+w}$ to~$\cP_{m(\ell,\ell)}$. We get a new polynomial 
$$
	P^*(w) = (3a^2+2a)w^3 + (3^{2\ell+1}-8a-12a^2)(w-1). 
$$
What are the common divisors of 
$$
	X = 3a^2+2a \quad \text{and} \quad Y = 3^{2\ell+1}-8a-12a^2 \:?
$$
We have
$$
	Y + 4X = 3^{2\ell+1}
$$
and, at the same time, 
$$
	X = a(3a+2)
$$
is factorized in two numbers, both are relatively prime to~$3$. 
Thus, taking any prime divisor of $Y$ and applying Eisenstein's criterion we see that 
$P^*(w)$ is irreducible over~$\Set{Q}$. 
\end{proof}

\begin{lem}[Eisenstein's criterion]
Consider a polynomial $P \in \Set{Q}[z]$, 
$$
	P(z) = a_n z^n + \dots + a_1 z + a_0, 
$$
and suppose that there exists a prime number $p$ sush that 
\begin{gather*}
	p \not\divides a_n, \qquad p^2 \not\divides a_0, \\
	p \divides a_j \quad \text{for} \quad j=0,1,\dots,n-1. 
\end{gather*}
Then $P(z)$ is irreducible over~$\Set{Q}$. 
\end{lem}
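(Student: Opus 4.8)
The statement is the classical Eisenstein criterion, so the plan is to give its standard proof by reduction modulo~$p$, the only genuine subtlety being the passage from factorizations over~$\Set{Q}$ to factorizations over~$\Set{Z}$ (the divisibility hypotheses implicitly assume the coefficients $a_j$ are integers). First I would invoke Gauss's lemma: if an integer polynomial factors as a product of two non-constant polynomials in~$\Set{Q}[z]$, then it already factors as a product of two non-constant polynomials in~$\Set{Z}[z]$ of the same respective degrees. This lets me assume, arguing by contradiction, that $P = QR$ with $Q = \sum_{i=0}^k b_i z^i$ and $R = \sum_{j=0}^{n-k} c_j z^j$ in~$\Set{Z}[z]$ and $1 \le k \le n-1$.

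Next I would reduce everything modulo~$p$, working in the polynomial ring $(\Set{Z}/p\Set{Z})[z]$ over the field $\Set{Z}/p\Set{Z}$. The hypotheses $p \divides a_j$ for $j < n$ together with $p \not\divides a_n$ say precisely that the reduction $\bar P$ equals the monomial $\bar a_n z^n$ with $\bar a_n \ne 0$. Hence in $(\Set{Z}/p\Set{Z})[z]$ we have $\bar Q\,\bar R = \bar a_n z^n$. Since $b_k c_{n-k} = a_n$ and $p \not\divides a_n$, neither leading coefficient drops, so $\bar Q$ and $\bar R$ keep their degrees $k$ and $n-k$, both positive.

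Now I would use that $(\Set{Z}/p\Set{Z})[z]$ is a unique factorization domain whose only irreducible divisor of $z^n$ is~$z$. Therefore each of $\bar Q$ and $\bar R$ is a non-zero constant multiple of a power of~$z$, and since both have positive degree their constant terms must vanish modulo~$p$, that is $p \divides b_0$ and $p \divides c_0$. But then $a_0 = b_0 c_0$ is divisible by~$p^2$, contradicting the hypothesis $p^2 \not\divides a_0$. This contradiction shows that no non-trivial factorization exists, so $P$ is irreducible over~$\Set{Q}$.

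The only place demanding real care is the first step: it is Gauss's lemma and the accompanying content (primitivity) argument that make the $\Set{Q}$-statement equivalent to the $\Set{Z}$-statement, and one must be explicit that a rational factorization can be rescaled to an integral one without changing the degrees. Once this reduction is in hand, the reduction-modulo-$p$ computation in the remaining paragraphs is entirely routine.
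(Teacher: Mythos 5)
Your proof is correct: it is the classical argument — Gauss's lemma to pass from a rational factorization to an integral one of the same degrees, then reduction modulo $p$ in $(\Set{Z}/p\Set{Z})[z]$ to force $p \divides b_0$ and $p \divides c_0$, contradicting $p^2 \not\divides a_0$. The paper states Eisenstein's criterion without any proof, as a well-known fact invoked in the proof of Theorem~\ref{thmIrrCubic}, so there is no paper argument to compare against; your write-up supplies the standard proof that the paper omits, and you are right to flag that the divisibility hypotheses implicitly place the coefficients in $\Set{Z}$, which is exactly what the Gauss's-lemma step makes rigorous.
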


It is interesting to remark that the quadratic polynomials given in lemma~\ref{lemQuadratic} 
are factorized over~$\Set{Q}$, thus, to see that there exists irreducible polynomial 
$\tP_m(z)$ we have to consider a particular example: 
$$
	\tP_2(z) = \frac16(z^2+4z+1). 
$$
Letting $z=-1+w$ we get a~polynomial 
$$
	P^*(w) = z^2+2z-2. 
$$
We can apply Eisenstein's criterion to $P^*$, since $2$ divides all the coefficient except 
the coefficient in~$z^2$, and $4$ do not divide~$-2$.

\section{Questions and hypotheses}



\begin{hypot}\label{H1} 
The limit polynomials $\tP_m(z)$ are {\it self-re\-ci\-pro\-cal}, that is 
$$
	\tP_m(z) = \sum_{k=0}^{d(m)} a_k z^k, \qquad a_k = a_{d(m)-k}. 
$$
\end{hypot}

{\it Corollary.} If hypothesis~\ref{H1} is true then 
$-1$ is a~root of a~polynomial $\tP_m(z)$, whenever $d(m) \in 2\Set{Z}+1$. 

\medskip 
We~have to mention that most questions below presume or at least require 
hypothethis~\ref{H1} for a~particular~$m$. 

\begin{defn}
Consider two configurations of the same length 
$$
	c_1 c_2 \dots c_N \quad \text{and} \quad c'_1 c'_2 \dots c'_N, 
$$
with $c_j,c'_j \in \{0,1,2\}$. We say that $m'$ is {\it conjugate\/} to $m$ 
and write $m' = m^*$
if ${c'_{j} = c_{N+1-j}}$. 
\end{defn}

\begin{hypot}\label{HSym}
The polynomials $\tP_m(z)$ and $\tP_{m^*}(z)$ coincide for 
any pair of conjugate configurations $m$ and~$m^*$. 
\end{hypot}

It can be observed from table~$1$ that some polynomials coincide 
even for non-conjugate configurations, for example, 
for ${m = 10 = \W{101}_3}$ and ${m' = 26 = \W{222}_3}$. 

\begin{quest}
Which pairs of polynomials $\tP_m(z)$ and $\tP_{m'}(z)$ coincide?
\end{quest}

Let $|m|_3$ be the length of the $3$-adic expansion of~$m$ if ${3 \not\divides m}$, 
and ${|m|_3 = |3^{-1}m|_3}$ otherwise. 

\begin{hypot}
$\Pz_m(z) = 2\cdot3^{|m|_3}\cdot\tP_m(z)$ is a polynomial with integer coefficients, 
$$
	\Pz_m = b_{m,0} + b_{m,1}z + \dots + b_{m,d(m)}z^d. 
$$
The greatest common divisor of $b_{m,j}$ is $1$ or~$2$. 
\end{hypot}


This is a well-known fact that the set of all weak limits of powers $\LP$ 
is a~semigroup. Thus, it is a~natural question: can we get a polynomial $P_m(z)$ 
as a~product of two different elements of~$\LP$?
 
\begin{hypot}\label{hypIrr}
The polinomial $\tP_m(z)$ has two or more factors which are not $(z+1)$ if and only if 
(see table~3)
$$
	|m|_3 \in 2\Set{Z} \quad \text{and} \quad m = m^*. 
$$ 
\end{hypot}

For example, the polynomial 
$$
	\tP_{68}(z) = \frac1{81}(3+5z+z^2)(1+5z+3z^2)
$$
corresponds to a symmetric configuration $68 = \W{2112}_3$. 

\medskip
In particular, if hypothesis~\ref{hypIrr} is true then the roots $r_j$ 
of a polinomial $\Pz_m(z)$ starting with $z^d + \dots$ are algebraic integers. 

\begin{figure}[ht]
	\includegraphics[width=75mm]{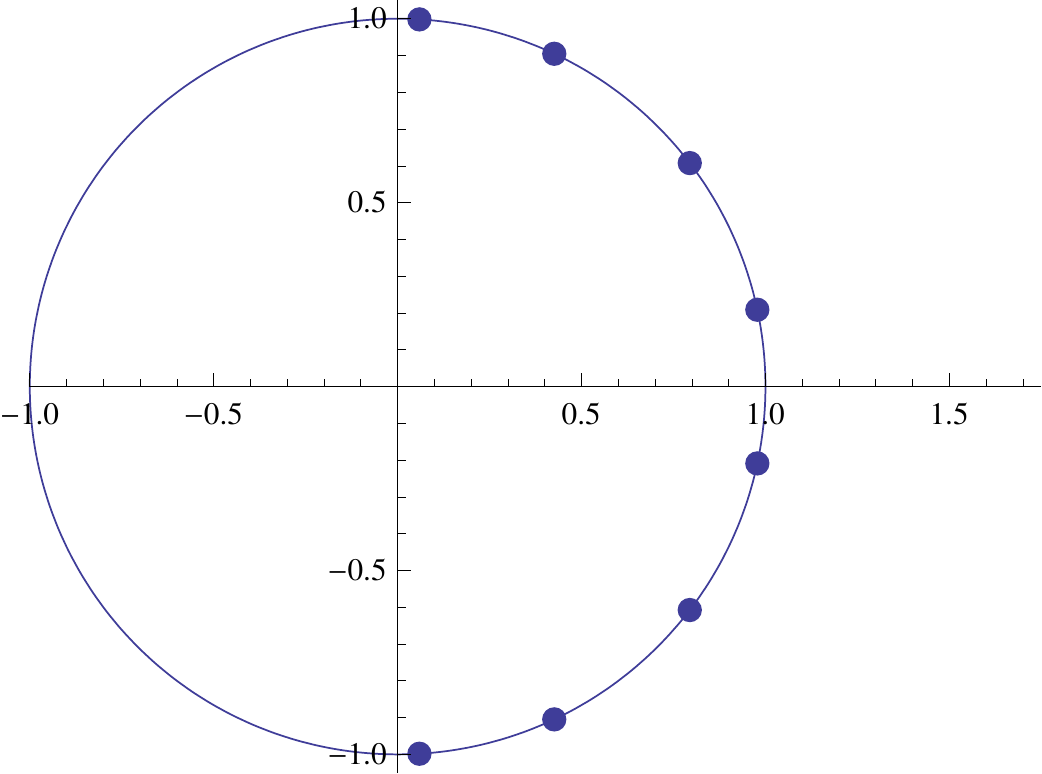} 
	\caption{Roots of the polynomials $Q_{1094}$.} 
	\label{fRootsA}
\end{figure}

\begin{hypot}\label{hypReal}
All roots of any $\tP_m(z)$ are real numbers ({\it Lee--Yang property\/}). 
\end{hypot}

\begin{rem}
It follows directly from hypothesis~\ref{H1} as well as the definition of the polynomials~$\tP_m(z)$ 
that the roots of $\tP_m(z)$ must be negative, and they appear in pairs: $r$~and~$r^{-1}$. 
\end{rem}

\begin{figure}[ht]
	\includegraphics[width=75mm]{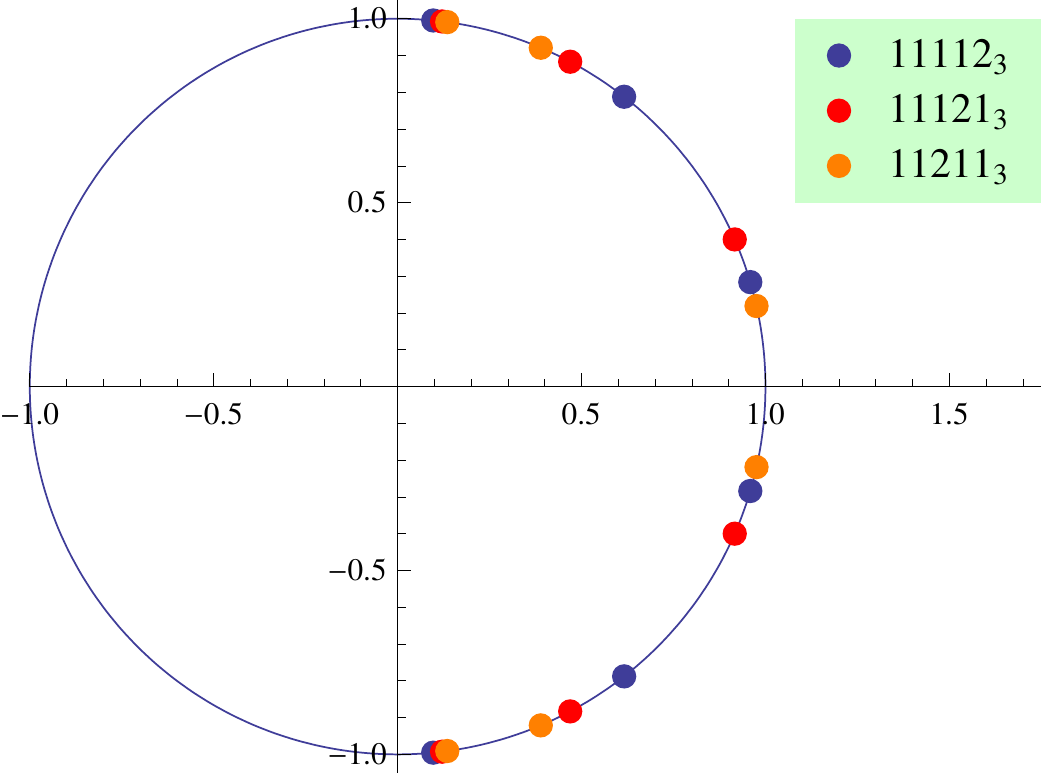} 
	\caption{Roots of the polynomials $Q_{122}$, $Q_{124}$ and~$Q_{130}$.} 
	\label{fRoots}
\end{figure}

Now, if we assume hypotheses \ref{H1}, \ref{hypIrr} and~\ref{hypReal} then 
applying to our polynomials the~transformation 
$$
	z = \kappa_1(z) = i \frac{z-1}{z+1} 
$$
mapping $\Set{R}$ to the unit circle in the complex plane, 
we can define the {\it dual polynomials\/} 
$$
	Q_m(w) = \tP_m(\kappa_1(z)) . 
$$

\begin{hypot}
The polynomials $Q_m(w)$ are self-re\-cip\-ro\-cal polynomials having all roots $\la_j$ on the unit circle 
and in the right-half plane: 
$$
	|\la_j| = 1, \qquad \Re \la_j > 0. 
$$
\end{hypot}

Let us consider, for example, the polynomials 
\begin{gather*}
	\Pz_{122}(z) = z^{6} + 26z^{5} + 120z^{4} + 192z^{3} + 120z^{2} + 26z + 1 
	\\
	\Pz_{124}(z) = z^{6} + 23z^{5} + 119z^{4} + 200z^{3} + 119z^{2} + 23z + 1 
	\\ 
	\Pz_{130}(z) = z^{6} + 22z^{5} + 120z^{4} + 200z^{3} + 120z^{2} + 22z + 1, 
\end{gather*}
corresponding to the configuration 
$$
	122 = \W{11112}_3, \qquad 
	124 = \W{11121}_3, \qquad 
	130 = \W{11211}_3. 
$$
The dual polynomials $Q_{m}(w)$ are 
\begin{gather*}
	Q_{122}(w) = { \tfrac{-2i}{486}}\big( \qquad \qquad \\ %
		35w^{6} - 117w^{5} + 209w^{4} - 250w^{3} + 209w^{2} - 117w + 35 \big) 
	\\
	Q_{124}(w) = { \tfrac{-i}{486}}\big( \qquad \qquad \\ %
		77w^{6} - 232w^{5} + 415w^{4} - 496w^{3} + 415w^{2} - 232w + 77 \big) 
	\\ 
	Q_{130}(w) = { \tfrac{-2i}{486}}\big( \qquad \qquad \\ %
		39w^{6} - 117w^{5} + 205w^{4} - 250w^{3} + 205w^{2} - 117w + 39 \big)
	, 
\end{gather*}
and the root of these polynomials are shown on fig.~\ref{fRoots}.

\begin{quest}
What is the asymptotic behaviour of the distributions $\rho_m$? 
\end{quest}

{\it Remark.} In the proof of theorem~\ref{thmExist} we consider, for a given degree~$d$,  
a set of polynomials corresponding to configurations 
$$
	m = \W{1}\; \W{0}^{\ell_1}\; 1 \W{0}^{\ell_2}\; \W{1} \dots \W{0}^{\ell_{d-1}}\; \W{1}_3, 
$$
where ones are separated by long sequences of zeroes. These configurations generate 
sums $\phi^{(m)}$ which are reduced to sums of $d$ almost independent random variables, 
and, in particular, 
$$
	\tP_m(z) \to \frac1{2^{d}}(1 + z)^d, \qquad \ell_j \to \infty. 
$$
Thus, it is easy to see that the corresponding destributions $\rho_m$ converge to the binomial distribution. 

\begin{figure}[ht]
	\includegraphics[width=75mm]{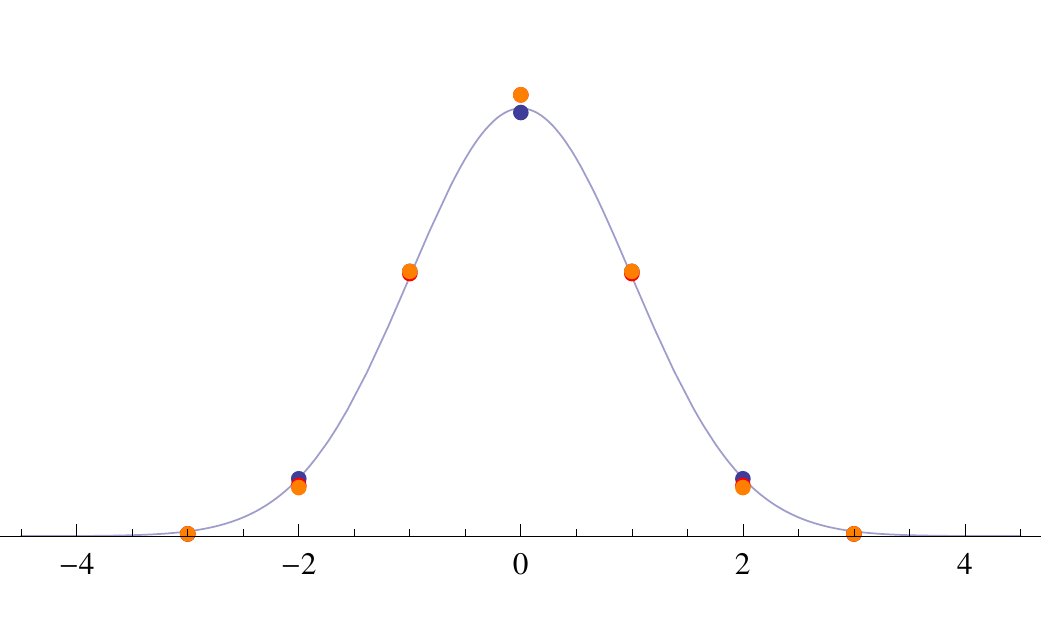} 
	\caption{The distributions $\rho_{122}$, $\rho_{124}$ and~$\rho_{130}$ 
	and the normal distribution.} 
	\label{fDistrRhoM}
\end{figure}

\begin{quest}
Is it true that the distributions $\rho_m$, centered and scaled, 
converge to the normal distribution as ${d(m) \to \infty}$ 
independently on the structure of~$\tP_m()z\:$? (see~fig.~\ref{fDistrRhoM})
\end{quest}



\begin{hypot}\label{hypFirst}
The first polynomils $\tP_m(z)$ of degree $d$ is observed at 
\begin{equation}\label{eFirstDegree}
	m = \frac{3^{d-1}+1}{2}. 
\end{equation}
\end{hypot}

\begin{hypot}
Consider a subsequence of polynomials described in hypothesis~\ref{hypFirst}. 
If $m$ is given by formula~\eqref{eFirstDegree} and ${m \in 2\Set{Z}}$ 
then the corresponding polynomials $\Pz_m(z)$ are irreducible monic self-reciprocal polinomials. 
If $m$ is odd then the same is true for $(z+1)^{-1}\Pz_m(z)$. 
The roots $r_j$ of $\Pz_m(z)$ are algebraic integers, moreover, ${r_j \in \Set{R}}$. 
\end{hypot}

\begin{rem}
In particular, if hypothesis~\ref{hypFirst} is true then the following estimate holds 
$$
	d(m) \le 1 + \log_3 (2m-1). 
$$
\end{rem}

\begin{quest}
How $d(m)$ depends on~$m$? 
\end{quest}


\begin{quest}
Is it true that no one polynomial $\tP_m(z)$ divides another polynomials in this sequence, 
and any $\tP_m(z)$ is never a product of different polynomials $\tP_{m'_k}(z)$ 
of smaller degree? 
\end{quest}

\begin{quest}
Is it true that any operator $\tP_m(\T)$ is not a product of different operators 
${A_j \in \LP}$ in the weak closure of powers of Chacon transformation~$\uT$? 
\end{quest}

\begin{quest}
Can we find $\tP_m(z)$ which is an isolated point in the semigroup generated by all~$\{\tP_{m'}\}$, 
and can we find $\tP_m(\T)$ which is an isolated point in~$\LP$? 
\end{quest}

\begin{quest}
Is it true that the set~$\LP$ does not contain operators given by series 
$$
	\sum_{j \in \Set{Z}} a_j \uT^j, 
$$
where inifinitely many ${a_j \not= 0\:}$? 
Is~it~possible to find among elemets $V \in \LP$ operators of the form 
$$
	V = \varkappa \Theta + \sum_j a_j \uT^j, \qquad \varkappa \not= 0 \: ? 
$$
\end{quest}

The following well-known question still has no answer as well. 

\begin{quest}
Is Chacon$_{(3)}$ transformation is $\varkappa$-mixing, which means that there exists ${V \in \LP}$ 
such that 
$$
	V = \varkappa \Theta + V_2, \qquad \varkappa \not= 0 \: ? 
$$
\end{quest}

\begin{hypot}
There exists ${\eps_0 > 0}$ such that 
among the polynomials $P_m(\uT)$ and in the set~$\LP$ there is no polynomials 
$$
	\sum_{j=0}^d a_j \uT^j, \qquad a_j > 0, 
$$ 
having the property 
$$
	\left| \frac{a_{j+1}}{a_j} - 1 \right| < \eps_0. 
$$ 
\end{hypot}

\begin{quest}
Can we find $\tP_m(z)$ which is an isolated point in the semigroup generated by all~$\{\tP_{m'}\}$, 
and can we find $\tP_m(\T)$ which is an isolated point in~$\LP$? 
\end{quest}

\begin{quest}
How we can describe the entire set $\LP$ for Chacon$_{(3)}$ transformation? 
\end{quest}

\bigskip
\section{Acknowledgements}

The authors are grateful to A.M.\,St\"epin, M.\,Le\-manczyk, J.\,Kwiatkow\-ski, E.\,Janvresse, T.~de~la~Rue, 
El~H.~El~Abdalaoui 
and K.\,Petersen for discussions and helpful remarks around the subject of the paper. 

This work is supported by CNRS (France), 
RFFI grant \No\,11-01-00759-a and 
the grant ``Leading Russian scientific schools'' \No\,NSh-5998.2012.1. 




\begin{widetext}
\bigskip
\section[Appendix: The limit polynomials]{Appendix: The limit polynomials}

\if0=1
The columns of this table indicate: 
\begin{itemize}
	\item the number $m$
	\item $3$-adic expansion of $m$ (configuration), 
	\item the polynomial $\tP_m(z)$ 
\end{itemize} 
\fi

\medskip
\medskip 
\begin{center}
	\bf Table 1. First $122$ limit polynomials $\tP_m(z)$ 
\end{center}
 
The columns of this table indicate: 
the number~$m$, $3$-adic expansion of $m$ (configuration), and the polynomial $\tP_m(z)$. 
We mark by ${}^*$ the idexes corresponding to configurations $\W{111}\dots\W{12}_3$. 
We~skip symmetrical configurations like ${\W{112}_3\sim\W{211}_3}$ 
following Hypothesis~\ref{HSym} which is true in this interval. 

\def\rr{\rule{0pt}{20pt}}
\def\CLine{\\[9pt]}
 
\begin{longtable}{|c|c|c|}
\hline \rr 
$\;${\bf Index} $m\;$ & $\;${\bf Configuration}$\;$ & 
	$\;${\bf Polynomial} $\tP_m(z)$ 
	\CLine 
\hline\endhead 
\color{blue} 
$1^*$ \rr & \color{blue} $\W{1}_3$ & \color{blue} $\dstyle \tP_1(z) = \tP_3(z) = \tP_9(Z) = \dots = \frac12(1+z)$ 
\CLine 
\hline \rr 
\color{blue} 
$2^*$ & \color{blue} $\W{2}_3$ & \color{blue} $\dstyle \tP_2(z) = \tP_6(z) = \dots = \frac16(1+4z+z^2)$ 
\CLine%
\hline \rr 
4 & $\W{11}_3$ &  $\dstyle \;\; \tP_{4}(z) = \frac1{9}(2z^{2} + 5z + 2)\;\;$ 
\CLine 
\hline \rr 
\color{blue}5$^*$ & \color{blue}$\W{12}_3$ & \color{blue} $\dstyle \;\; \tP_{5}(z) = \frac1{18}(z^{3} + 8z^{2} + 8z + 1)\;\;$ 
\CLine 
\hline \rr 
8 & $\W{22}_3$ &  $\dstyle \;\; \tP_{8}(z) = \frac1{9}(2z^{2} + 5z + 2)\;\;$ 
\CLine 
\hline \rr 
10 & $\W{101}_3$ &  $\dstyle \;\; \tP_{10}(z) = \frac1{54}(13z^{2} + 28z + 13)\;\;$ 
\CLine 
\hline \rr 
11 & $\W{102}_3$ &  $\dstyle \;\; \tP_{11}(z) = \frac1{54}(4z^{3} + 23z^{2} + 23z + 4)\;\;$ 
\CLine 
\hline \rr 
13 & $\W{111}_3$ &  $\dstyle \;\; \tP_{13}(z) = \frac1{54}(5z^{3} + 22z^{2} + 22z + 5)\;\;$ 
\CLine 
\hline \rr 
\color{blue}14$^*$ & \color{blue}$\W{112}_3$ & \color{blue} $\dstyle \;\; \tP_{14}(z) = \frac1{54}(z^{4} + 13z^{3} + 26z^{2} + 13z + 1)\;\;$ 
\CLine 
\hline \rr 
16 & $\W{121}_3$ &  $\dstyle \;\; \tP_{16}(z) = \frac1{54}(z^{4} + 12z^{3} + 28z^{2} + 12z + 1)\;\;$ 
\CLine 
\hline \rr 
17 & $\W{122}_3$ &  $\dstyle \;\; \tP_{17}(z) = \frac1{54}(4z^{3} + 23z^{2} + 23z + 4)\;\;$ 
\CLine 
\hline \rr 
20 & $\W{202}_3$ &  $\dstyle \;\; \tP_{20}(z) = \frac1{54}(z^{4} + 12z^{3} + 28z^{2} + 12z + 1)\;\;$ 
\CLine 
\hline \rr 
23 & $\W{212}_3$ &  $\dstyle \;\; \tP_{23}(z) = \frac1{54}(5z^{3} + 22z^{2} + 22z + 5)\;\;$ 
\CLine 
\hline \rr 
26 & $\W{222}_3$ &  $\dstyle \;\; \tP_{26}(z) = \frac1{54}(13z^{2} + 28z + 13)\;\;$ 
\CLine 
\hline \rr 
28 & $\W{1001}_3$ &  $\dstyle \;\; \tP_{28}(z) = \frac1{81}(20z^{2} + 41z + 20)\;\;$ 
\CLine 
\hline \rr 
29 & $\W{1002}_3$ &  $\dstyle \;\; \tP_{29}(z) = \frac1{162}(13z^{3} + 68z^{2} + 68z + 13)\;\;$ 
\CLine 
\hline \rr 
31 & $\W{1011}_3$ &  $\dstyle \;\; \tP_{31}(z) = \frac1{162}(17z^{3} + 64z^{2} + 64z + 17)\;\;$ 
\CLine 
\hline \rr 
32 & $\W{1012}_3$ &  $\dstyle \;\; \tP_{32}(z) = \frac1{81}(2z^{4} + 20z^{3} + 37z^{2} + 20z + 2)\;\;$ 
\CLine 
\hline \rr 
34 & $\W{1021}_3$ &  $\dstyle \;\; \tP_{34}(z) = \frac1{162}(4z^{4} + 39z^{3} + 76z^{2} + 39z + 4)\;\;$ 
\CLine 
\hline \rr 
35 & $\W{1022}_3$ &  $\dstyle \;\; \tP_{35}(z) = \frac1{162}(16z^{3} + 65z^{2} + 65z + 16)\;\;$ 
\CLine 
\hline \rr 
38 & $\W{1102}_3$ &  $\dstyle \;\; \tP_{38}(z) = \frac1{162}(5z^{4} + 39z^{3} + 74z^{2} + 39z + 5)\;\;$ 
\CLine 
\hline \rr 
40 & $\W{1111}_3$ &  $\dstyle \;\; \tP_{40}(z) = \frac1{81}(3z^{4} + 20z^{3} + 35z^{2} + 20z + 3)\;\;$ 
\CLine 
\hline \rr 
\color{blue}41$^*$ & \color{blue}$\W{1112}_3$ & \color{blue} $\dstyle \;\; \tP_{41}(z) = \frac1{162}(z^{5} + 19z^{4} + 61z^{3} + 61z^{2} + 19z + 1)\;\;$ 
\CLine 
\hline \rr 
43 & $\W{1121}_3$ &  $\dstyle \;\; \tP_{43}(z) = \frac1{162}(z^{5} + 17z^{4} + 63z^{3} + 63z^{2} + 17z + 1)\;\;$ 
\CLine 
\hline \rr 
44 & $\W{1122}_3$ &  $\dstyle \;\; \tP_{44}(z) = \frac1{81}(2z^{4} + 20z^{3} + 37z^{2} + 20z + 2)\;\;$ 
\CLine 
\hline \rr 
47 & $\W{1202}_3$ &  $\dstyle \;\; \tP_{47}(z) = \frac1{162}(z^{5} + 16z^{4} + 64z^{3} + 64z^{2} + 16z + 1)\;\;$ 
\CLine 
\hline \rr 
50 & $\W{1212}_3$ &  $\dstyle \;\; \tP_{50}(z) = \frac1{162}(5z^{4} + 39z^{3} + 74z^{2} + 39z + 5)\;\;$ 
\CLine 
\hline \rr 
52 & $\W{1221}_3$ &  $\dstyle \;\; \tP_{52}(z) = \frac1{81}(2z^{4} + 18z^{3} + 41z^{2} + 18z + 2)\;\;$ 
\CLine 
\hline \rr 
53 & $\W{1222}_3$ &  $\dstyle \;\; \tP_{53}(z) = \frac1{162}(13z^{3} + 68z^{2} + 68z + 13)\;\;$ 
\CLine 
\hline \rr 
56 & $\W{2002}_3$ &  $\dstyle \;\; \tP_{56}(z) = \frac1{81}(2z^{4} + 18z^{3} + 41z^{2} + 18z + 2)\;\;$ 
\CLine 
\hline \rr 
59 & $\W{2012}_3$ &  $\dstyle \;\; \tP_{59}(z) = \frac1{162}(z^{5} + 17z^{4} + 63z^{3} + 63z^{2} + 17z + 1)\;\;$ 
\CLine 
\hline \rr 
62 & $\W{2022}_3$ &  $\dstyle \;\; \tP_{62}(z) = \frac1{162}(4z^{4} + 39z^{3} + 76z^{2} + 39z + 4)\;\;$ 
\CLine 
\hline \rr 
68 & $\W{2112}_3$ &  $\dstyle \;\; \tP_{68}(z) = \frac1{81}(3z^{4} + 20z^{3} + 35z^{2} + 20z + 3)\;\;$ 
\CLine 
\hline \rr 
71 & $\W{2122}_3$ &  $\dstyle \;\; \tP_{71}(z) = \frac1{162}(17z^{3} + 64z^{2} + 64z + 17)\;\;$ 
\CLine 
\hline \rr 
80 & $\W{2222}_3$ &  $\dstyle \;\; \tP_{80}(z) = \frac1{81}(20z^{2} + 41z + 20)\;\;$ 
\CLine 
\hline \rr 
82 & $\W{10001}_3$ &  $\dstyle \;\; \tP_{82}(z) = \frac1{486}(121z^{2} + 244z + 121)\;\;$ 
\CLine 
\hline \rr 
83 & $\W{10002}_3$ &  $\dstyle \;\; \tP_{83}(z) = \frac1{486}(40z^{3} + 203z^{2} + 203z + 40)\;\;$ 
\CLine 
\hline \rr 
85 & $\W{10011}_3$ &  $\dstyle \;\; \tP_{85}(z) = \frac1{486}(53z^{3} + 190z^{2} + 190z + 53)\;\;$ 
\CLine 
\hline \rr 
86 & $\W{10012}_3$ &  $\dstyle \;\; \tP_{86}(z) = \frac1{486}(13z^{4} + 121z^{3} + 218z^{2} + 121z + 13)\;\;$ 
\CLine 
\hline \rr 
88 & $\W{10021}_3$ &  $\dstyle \;\; \tP_{88}(z) = \frac1{486}(13z^{4} + 120z^{3} + 220z^{2} + 120z + 13)\;\;$ 
\CLine 
\hline \rr 
89 & $\W{10022}_3$ &  $\dstyle \;\; \tP_{89}(z) = \frac1{486}(52z^{3} + 191z^{2} + 191z + 52)\;\;$ 
\CLine 
\hline \rr 
91 & $\W{10101}_3$ &  $\dstyle \;\; \tP_{91}(z) = \frac1{486}(56z^{3} + 187z^{2} + 187z + 56)\;\;$ 
\CLine 
\hline \rr 
92 & $\W{10102}_3$ &  $\dstyle \;\; \tP_{92}(z) = \frac1{486}(17z^{4} + 120z^{3} + 212z^{2} + 120z + 17)\;\;$ 
\CLine 
\hline \rr 
94 & $\W{10111}_3$ &  $\dstyle \;\; \tP_{94}(z) = \frac1{486}(21z^{4} + 121z^{3} + 202z^{2} + 121z + 21)\;\;$ 
\CLine 
\hline \rr 
95 & $\W{10112}_3$ &  $\dstyle \;\; \tP_{95}(z) = \frac1{486}(4z^{5} + 61z^{4} + 178z^{3} + 178z^{2} + 61z + 4)\;\;$ 
\CLine 
\hline \rr 
97 & $\W{10121}_3$ &  $\dstyle \;\; \tP_{97}(z) = \frac1{486}(4z^{5} + 56z^{4} + 183z^{3} + 183z^{2} + 56z + 4)\;\;$ 
\CLine 
\hline \rr 
98 & $\W{10122}_3$ &  $\dstyle \;\; \tP_{98}(z) = \frac1{486}(16z^{4} + 121z^{3} + 212z^{2} + 121z + 16)\;\;$ 
\CLine 
\hline \rr 
100 & $\W{10201}_3$ &  $\dstyle \;\; \tP_{100}(z) = \frac1{243}(8z^{4} + 60z^{3} + 107z^{2} + 60z + 8)\;\;$ 
\CLine 
\hline \rr 
101 & $\W{10202}_3$ &  $\dstyle \;\; \tP_{101}(z) = \frac1{486}(4z^{5} + 55z^{4} + 184z^{3} + 184z^{2} + 55z + 4)\;\;$ 
\CLine 
\hline \rr 
103 & $\W{10211}_3$ &  $\dstyle \;\; \tP_{103}(z) = \frac1{486}(4z^{5} + 59z^{4} + 180z^{3} + 180z^{2} + 59z + 4)\;\;$ 
\CLine 
\hline \rr 
104 & $\W{10212}_3$ &  $\dstyle \;\; \tP_{104}(z) = \frac1{243}(10z^{4} + 60z^{3} + 103z^{2} + 60z + 10)\;\;$ 
\CLine 
\hline \rr 
106 & $\W{10221}_3$ &  $\dstyle \;\; \tP_{106}(z) = \frac1{486}(16z^{4} + 117z^{3} + 220z^{2} + 117z + 16)\;\;$ 
\CLine 
\hline \rr 
107 & $\W{10222}_3$ &  $\dstyle \;\; \tP_{107}(z) = \frac1{486}(52z^{3} + 191z^{2} + 191z + 52)\;\;$ 
\CLine 
\hline \rr 
110 & $\W{11002}_3$ &  $\dstyle \;\; \tP_{110}(z) = \frac1{486}(17z^{4} + 117z^{3} + 218z^{2} + 117z + 17)\;\;$ 
\CLine 
\hline \rr 
112 & $\W{11011}_3$ &  $\dstyle \;\; \tP_{112}(z) = \frac1{243}(11z^{4} + 60z^{3} + 101z^{2} + 60z + 11)\;\;$ 
\CLine 
\hline \rr 
113 & $\W{11012}_3$ &  $\dstyle \;\; \tP_{113}(z) = \frac1{486}(5z^{5} + 61z^{4} + 177z^{3} + 177z^{2} + 61z + 5)\;\;$ 
\CLine 
\hline \rr 
115 & $\W{11021}_3$ &  $\dstyle \;\; \tP_{115}(z) = \frac1{486}(5z^{5} + 59z^{4} + 179z^{3} + 179z^{2} + 59z + 5)\;\;$ 
\CLine 
\hline \rr 
116 & $\W{11022}_3$ &  $\dstyle \;\; \tP_{116}(z) = \frac1{243}(10z^{4} + 60z^{3} + 103z^{2} + 60z + 10)\;\;$ 
\CLine 
\hline \rr 
119 & $\W{11102}_3$ &  $\dstyle \;\; \tP_{119}(z) = \frac1{486}(6z^{5} + 61z^{4} + 176z^{3} + 176z^{2} + 61z + 6)\;\;$ 
\CLine 
\hline \rr 
121 & $\W{11111}_3$ &  $\dstyle \;\; \tP_{121}(z) = \frac1{486}(7z^{5} + 65z^{4} + 171z^{3} + 171z^{2} + 65z + 7)\;\;$ 
\CLine 
\hline \rr 
\color{blue}122$^*$ & \color{blue}$\W{11112}_3$ & \color{blue} $\dstyle \;\; \tP_{122}(z) = \frac1{486}(z^{6} + 26z^{5} + 120z^{4} + 192z^{3} + 120z^{2} + 26z + 1)\;\;$ 
\CLine 
\hline
\end{longtable}

\medskip
\medskip 
\begin{center}
	\bf Table 2. Several remarkable limit polynomials $\tP_m(z)$ for~${m \le 1094}$. 
\end{center}
 
\begin{longtable}{|c|c|c|}
\hline \rr 
$\;${\bf Index} $m\;$ & $\;${\bf Configuration}$\;$ & 
	$\;${\bf Polynomial} $\tP_m(z)$ 
	\CLine 
\hline\endhead 
\multicolumn{3}{|c|}{ \large\it First occurence of degree $d$ \rr } 
\CLine 
\hline
$1$ \rr & 
$\W{1}_3$ & 
$\dstyle \tP_1(z) = \tP_3(z) = \tP_9(Z) = \dots = \frac12(1+z)$ 
\CLine 
\hline \rr 
$2^*$ & 
$\W{2}_3$ & 
$\dstyle \tP_2(z) = \tP_6(z) = \dots = \frac16(1+4z+z^2)$ 
\CLine%
\hline \rr 
5$^*$ & 
$\W{12}_3$ & 
$\dstyle \;\; \tP_{5}(z) = \frac1{18}(z^{3} + 8z^{2} + 8z + 1)\;\;$ 
\CLine 
\hline \rr 
14$^*$ & 
$\W{112}_3$ & 
$\dstyle \;\; \tP_{14}(z) = \frac1{54}(z^{4} + 13z^{3} + 26z^{2} + 13z + 1)\;\;$ 
\CLine 
\hline \rr 
41$^*$ & 
$\W{1112}_3$ & 
$\dstyle \;\; \tP_{41}(z) = \frac1{162}(z^{5} + 19z^{4} + 61z^{3} + 61z^{2} + 19z + 1)\;\;$ 
\CLine 
\hline \rr 
122$^*$ & 
$\W{11112}_3$ & 
$\dstyle \;\; \tP_{122}(z) = \frac1{486}(z^{6} + 26z^{5} + 120z^{4} + 192z^{3} + 120z^{2} + 26z + 1)\;\;$ 
\CLine 
\hline \rr 
365$^*$ & 
$\W{111112}_3$ & 
$\dstyle \;\; \tP_{365}(z) = \frac1{1458}(z^{7} + 34z^{6} + 211z^{5} + 483z^{4} + 483z^{3} + 211z^{2} + 34z + 1)\;\;$ 
\CLine 
\hline \rr 
1094$^*$ & 
$\W{1111112}_3$ & 
$\dstyle \;\; \tP_{1094}(z) = \frac1{4374}(z^{8} + 43z^{7} + 343z^{6} + 1050z^{5} + 1500z^{4} + 1050z^{3} + 343z^{2} + 43z + 1)\;\;$ 
\CLine 
\hline 
\multicolumn{3}{|c|}{ \large\it Similar configurations \rr } 
\CLine 
\hline \rr 
122 & $\W{11112}_3$ & $\dstyle \;\; \tP_{122}(z) = \frac1{486}(z^{6} + 26z^{5} + 120z^{4} + 192z^{3} + 120z^{2} + 26z + 1)\;\;$ 
\CLine 
\hline \rr 
124 & $\W{11121}_3$ &  $\dstyle \;\; \tP_{124}(z) = \frac1{486}(z^{6} + 23z^{5} + 119z^{4} + 200z^{3} + 119z^{2} + 23z + 1)\;\;$ 
\CLine 
\hline \rr 
130 & $\W{11211}_3$ &  $\dstyle \;\; \tP_{130}(z) = \frac1{486}(z^{6} + 22z^{5} + 120z^{4} + 200z^{3} + 120z^{2} + 22z + 1)\;\;$ 
\CLine 
\hline \rr 
148 & $\W{12111}_3$ &  $\dstyle \;\; \tP_{148}(z) = \frac1{486}(z^{6} + 23z^{5} + 119z^{4} + 200z^{3} + 119z^{2} + 23z + 1)\;\;$ 
\CLine 
\hline \rr 
202 & $\W{21111}_3$ & $\dstyle \;\; \tP_{202}(z) = \frac1{486}(z^{6} + 26z^{5} + 120z^{4} + 192z^{3} + 120z^{2} + 26z + 1)\;\;$ 
\CLine 
\hline 
\multicolumn{3}{|c|}{ \large\it Irreducible up to a root $(-1)$ cubic polynomials \rr } 
\CLine 
\hline \rr 
91 & $\W{10101}_3$ &  $\dstyle \;\; \tP_{91}(z) = \frac1{486}(56z^{3} + 187z^{2} + 187z + 56)\;\;$ 
\CLine 
\hline \rr 
253 & $\W{100101}_3$ &  $\dstyle \;\; \tP_{253}(z) = \frac1{1458}(173z^{3} + 556z^{2} + 556z + 173)\;\;$ 
\CLine 
\hline \rr 
739 & $\W{1000101}_3$ &  $\dstyle \;\; \tP_{739}(z) = \frac1{4374}(524z^{3} + 1663z^{2} + 1663z + 524)\;\;$ 
\CLine 
\hline \rr 
757 & $\W{1001001}_3$ &  $\dstyle \;\; \tP_{757}(z) = \frac1{4374}(533z^{3} + 1654z^{2} + 1654z + 533)\;\;$ 
\CLine 
\hline
\end{longtable}

\newpage 
\begin{center}
	\bf Table 3. Non-irreducible polynomials $\tP_m(z)$ for~${m \le 122}$ (up to a~root~$-1$). 
\end{center}
 
\begin{longtable}{|c|c|c|}
\hline \rr 
$\;${\bf Index} $m\;$ & $\;${\bf Configuration}$\;$ & 
	$\;${\bf Factorization of} $\tP_m(z)$ 
	\CLine 
\hline\endhead 
4 \rr & $\W{11}_3$ & $\dstyle \tP_4(z) = \frac19(2+z)(1+2z)$ 
\CLine 
\hline \rr 
8 & $\W{22}_3$ & $\dstyle \tP_8(z) = \frac19(2+z)(1+2z)$ 
\CLine 
\hline \rr 
28 & $\W{1001}_3$ & $\dstyle \tP_{28}(z) = \frac1{81}(5+4z)(4+5z)$ 
\CLine 
\hline \rr 
40 & $\W{1111}_3$ & $\dstyle \tP_{40}(z) = \frac1{81}(3+5z+z^2)(1+5z+3z^2)$ 
\CLine 
\hline \rr 
52 & $\W{1221}_3$ & $\dstyle \tP_{52}(z) = \frac1{81}(2+6z+z^2)(1+6z+2z^2)$ 
\CLine 
\hline \rr 
56 & $\W{2002}_3$ & $\dstyle \tP_{56}(z) = \frac1{81}(2+6z+z^2)(1+6z+2z^2)$ 
\CLine 
\hline \rr 
68 & $\W{2112}_3$ & $\dstyle \tP_{68}(z) = \frac1{81}(3+5z+z^2)(1+5z+3z^2)$ 
\CLine 
\hline \rr 
80 & $\W{2222}_3$ & $\dstyle \tP_{80}(z) = \frac1{9}(5+4z)(4+5z)$ 
\CLine 
\hline \rr 
244 & $\W{111111}_3$ & $\dstyle \tP_{244}(z) = \frac1{729}(14+13z)(13+14z)$ 
\CLine 
\hline
\end{longtable}

\section{References}

\addvspace{-6pt}

\bibliographystyle{alpha}
\bibliography{AA}


\end{widetext}\end{document}